\documentclass{amsart}
\usepackage[english]{babel}
\usepackage{amsmath,amsfonts,amssymb,amsthm}
\usepackage[all]{xy}
\usepackage[mathcal]{eucal}
\usepackage{mathrsfs}
\usepackage{caption}
\usepackage{hyperref}
\newtheorem{lm}{Lemma}[section]
\newtheorem{prop}[lm]{Proposition}
\newtheorem{thm}[lm]{Theorem}
\newtheorem{cor}[lm]{Corollary}
\newtheorem{df}[lm]{Definition}

\theoremstyle{definition}
\newtheorem{rk}[lm]{Remark}

\newcommand{\C}{\mathbb{C}}
\newcommand{\R}{\mathbb{R}}
\newcommand{\val}{\mathrm{val}}
\newcommand{\Idlsh}{\mathscr{I}}
\newcommand{\regsh}{\mathcal{O}}
\newcommand{\dual}{{}^\vee}
\newcommand{\Q}{\mathbb{Q}}
\newcommand{\Z}{\mathbb{Z}}
\newcommand{\N}{\mathbb{N}}
\renewcommand{\div}{\mathrm{div}}
\newcommand{\Lscr}{\mathscr{L}}
\newcommand{\ord}{\mathrm{ord}}
\newcommand{\discrep}{\mathrm{discrep}}
\newcommand{\totaldiscrep}{\mathrm{totaldiscrep}}
\newcommand{\e}{\varepsilon}

\newcommand{\exc}{\mathrm{exc}}
\newcommand{\Rscr}{\mathscr{R}}
\newcommand{\Proj}{\mathrm{Proj}}
\newcommand{\Xtilde}{\widetilde{X}}
\newcommand{\dfrak}{\mathfrak{d}}
\newcommand{\Dfrak}{\mathfrak{D}}
\setlength\arraycolsep{1pt}
\title{Log terminal singularities}
\author{alberto chiecchio and stefano urbinati}
\date{}
\begin{document}

\begin{abstract}
In this paper we give a new point of view for optimizing the definitions related to the study of singularities of normal varieties, introduced in \cite{dFH} and further studied in \cite{stefano} and \cite{stefano2}, in relation to the Minimal Model Program. We introduce a notion of discrepancy for normal varieties, and we define log terminal${}^+$ singularities. We use finite generation to relate these new singularities with log terminal singularities (in the sense of \cite{dFH}).
\end{abstract}
\maketitle
\tableofcontents

\section{Introduction}
In \cite{dFH} de Fernex and Hacon define a pullback for Weil divisors, giving two possible natural choices. In this paper we motivate, under the point of view of the Minimal Model Program, which one seems to be the most reasonable to work with and we further study properties of these singularities.

In \cite{dFH}, de Fernex and Hacon defined a notion of discrepancies for normal varieties. In particular, their approach is via introducing a pullback for Weil divisors, obtained with a limiting process (cf. Section 2). This new pullback has most of the expected properties, however it is not linear, i.e. when a divisor $D$ is not $\Q$-Cartier, in most cases $$f^*(-D) \neq -f^*(D).$$
This gives rise to two choices for the relative canonical divisor
$$K_{Y/X}^- := K_Y - f^*(K_X) \quad \mbox{and} \quad K_{Y/X}^+:= K_Y + f^*(-K_X),$$
where $f:Y \to X$ is any proper birational map of normal varieties. The first one is related to the notion of log terminal and log canonical singularity, whereas the second to the notion of canonical and terminal singularity. For this reason some pathologies may occur. Fox example, in \cite[4.1]{stefano} the author gives an example of a variety with singularities that are canonical but not log terminal. One of the main point of this work is to study the properties of singularities with respect to these two different relative canonical divisors, and relate them.

In sections 2 and 3 we review the main definitions of \cite{dFH}. We will prove that also $K^+_{Y/X}$ can be deterermined looking at (antieffective) boundaries, fact that \emph{a priori} had no reason to be (see corollary \ref{cor:K+ and boundaries}).

In section 4 we discuss limiting discrepancies, using the relative canonical divisor $K^-_{Y/X}$. We will show that these discepancies work as in the usual $\Q$-Gorenstein case. 

In section 5 we will focus on singularities determined by $K^+_{Y/X}$. In particular we will define \emph{log terminal${}^+$} singularities, which will correspond to $K^+_{Y/X}>-1$ (for every resolution $f:Y\rightarrow X$). One of the two main results of this work will be in this setting. We will prove the following.
\begin{thm}[Theorems \ref{thm:fgcr} and \ref{thm:kltiffanticanonicalfg}] If $X$ is a lt${}^+$ normal variety, then $\Rscr(X,K_X)$ is finitely generated over $X$. Moreover, in this case, $X$ is klt if and only if $\Rscr(X,-K_X)$ is finitely generated.
\end{thm}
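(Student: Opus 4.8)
\medskip
\noindent\textbf{Proof proposal.}
The asserted statement is the conjunction of Theorems~\ref{thm:fgcr} and~\ref{thm:kltiffanticanonicalfg}, which I would prove in that order. The organising idea in both halves is Corollary~\ref{cor:K+ and boundaries}: it expresses the intrinsically defined, non\nobreakdash-linear divisor $f^*(-K_X)$ through genuine $\Q$-Cartier boundaries, so that once one is back among $\Q$-Cartier pairs the finite-generation results of the minimal model program (BCHM, in the form relative to $X$) apply.

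For the finite generation of $\Rscr(X,K_X)$, the plan is to use Corollary~\ref{cor:K+ and boundaries} to produce from ``$X$ is lt${}^+$'' an effective $\Q$-divisor $\Delta$ with $K_X+\Delta$ $\Q$-Cartier and $(X,\Delta)$ klt: since $K_{Y/X}^+$ is expressed through relative canonical divisors $K_Y-g^*(K_X+\Delta)$ of such $\Q$-Cartier boundaries, the condition $K_{Y/X}^+>-1$ for every $Y$ should force, after bounding which divisors need to be tested, one boundary $\Delta$ for which all discrepancies are already $>-1$. Then I would write $K_X=(K_X+\Delta)-\Delta$, set $L:=K_X+\Delta$, fix a log resolution $\mu\colon W\to X$ of $(X,\operatorname{Supp}\Delta)$, and write $K_W=\mu^*L-\mu^{-1}_*\Delta+\sum a_iE_i$ with all $a_i>-1$. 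A comparison of reflexive sheaves along the $\mu$-exceptional locus identifies $\Rscr(X,K_X)$, after passing to a Veronese, with the $X$-relative section ring of a divisor on $W$ of the shape $\mu^*L+B$, where $\mu^*L$ is $\Q$-Cartier and $\mu$-nef (being pulled back from $X$) and $B$ is supported on the $\mu$-exceptional locus and on the strict transform of $\Delta$, with negative part having coefficients $>-1$. This is an adjoint ring of klt type relative to the base, hence finitely generated by BCHM, and pushing it forward shows that $\Rscr(X,K_X)$ is a finitely generated $\regsh_X$-algebra. The delicate point is exactly this translation: realising the limiting object $\Rscr(X,K_X)$ as an honest adjoint ring on $W$ whose coefficients stay in the klt range, which is where Corollary~\ref{cor:K+ and boundaries} is indispensable.

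Now assume $X$ is lt${}^+$ and consider the equivalence. If $X$ is klt, then $K_X$, hence $-K_X$, is $\Q$-Cartier of some index $r$, and $\Rscr(X,-K_X)$ is a finite module over its $r$-th Veronese subalgebra $\bigoplus_{m\ge 0}\regsh_X(-rK_X)^{\otimes m}=\mathrm{Sym}_{\regsh_X}\!\bigl(\regsh_X(-rK_X)\bigr)$, hence finitely generated; this is the easy implication. For the converse, assume $\Rscr(X,-K_X)$ finitely generated and set $X^{ac}:=\Proj_X\Rscr(X,-K_X)$, with structure morphism $\pi\colon X^{ac}\to X$; by construction $-K_{X^{ac}}$ is $\Q$-Cartier and $\pi$-ample, and $\pi_*\regsh_{X^{ac}}(-mK_{X^{ac}})=\regsh_X(-mK_X)$ for $m$ sufficiently divisible. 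The key step is to show that $X^{ac}$ computes the de~Fernex--Hacon pull-back of $-K_X$: for any resolution factoring as $Y\xrightarrow{g}X^{ac}\xrightarrow{\pi}X$ one has $f^*(-K_X)=g^*(-K_{X^{ac}})$, an ordinary $\Q$-Cartier pull-back, because the limiting process defining $f^*$ stabilises once $\pi$ has resolved the failure of $-K_X$ to be $\Q$-Cartier. Hence
\[
K_{Y/X}^+ = K_Y+g^*(-K_{X^{ac}}) = K_Y-g^*(K_{X^{ac}}) = K_{Y/X^{ac}}^-,
\]
an \emph{ordinary} relative canonical divisor; so the hypothesis $K_{Y/X}^+>-1$ for all $Y$ says exactly that $X^{ac}$ is klt in the usual sense.

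It then remains to descend kltness from $X^{ac}$ to $X$, and this is the step I expect to be the main obstacle. The plan is first to show that $\pi$ contracts no divisor --- so $\pi$ is small --- by using the lt${}^+$ control on the discrepancies of $\pi$-exceptional divisors together with the fact (Theorem~\ref{thm:fgcr}) that $\Rscr(X,K_X)$ is finitely generated, so that the ``opposite'' model $\Proj_X\Rscr(X,K_X)\to X$ is also available and small. Once $\pi$ is small, $K_{X^{ac}}$ being $\Q$-Cartier makes $\pi$ a small $\Q$-Gorenstein modification, which (by the de~Fernex--Hacon principle that such modifications compute the pull-back of $K_X$) gives $K_{Y/X}^-=K_{Y/X^{ac}}^-$ for every $Y$; since $X^{ac}$ is klt, so is $X$. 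The heart of the difficulty is thus to use the lt${}^+$ hypothesis and the finite generation of $\Rscr(X,-K_X)$ simultaneously to control the birational geometry of $\pi$; everything else is an assembly of Corollary~\ref{cor:K+ and boundaries} with standard relative finite-generation and MMP results.
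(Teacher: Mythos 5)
\noindent There is a genuine gap, and it sits at the very first step of your argument for the finite generation of $\Rscr(X,K_X)$. You propose to use Corollary~\ref{cor:K+ and boundaries} to extract from the lt${}^+$ hypothesis an \emph{effective} $\Q$-divisor $\Delta$ with $K_X+\Delta$ $\Q$-Cartier and $(X,\Delta)$ klt. This cannot work: by Theorem~\ref{thm:dFH 7.2} the existence of such a $\Delta$ is exactly condition $\mathrm{M}_{>-1}$, i.e.\ $X$ klt in the sense of \cite{dFH}, and there are lt${}^+$ varieties which are \emph{not} klt (Remark~\ref{rk:lt+fg}, citing \cite[4.1]{stefano}); if your first step were available, the second half of the statement would be vacuous. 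The sign is the culprit: Corollary~\ref{cor:K+ and boundaries} writes $K^+_{Y/X}$ as an \emph{infimum} of $K^{-\Delta}_{Y/X}$ over effective $\Delta$ with $K_X-\Delta$ (not $K_X+\Delta$) $\Q$-Cartier, so the relevant pairs $(X,-\Delta)$ have \emph{anti-effective} boundary, and an infimum being $>-1$ produces neither a single boundary working for all divisors nor an effective klt pair. The paper's actual mechanism is the sheaf criterion of Proposition~\ref{prop:sheaf criterion}, $\regsh_X(mK_X)\cdot\regsh_Y\subseteq\regsh_Y\big(m(K_Y+(1-\e)F_Y)\big)$, applied with $Y$ the relative canonical model $X^c=\Proj_X\Rscr(p,K_{\Xtilde})$ of a resolution (which exists and is klt by \cite{bchm}); pushing forward identifies $\Rscr(X,dK_X)$ with the adjoint ring $\Rscr(f,d(K_{X^c}+(1-\e)F_{X^c}))$ on the klt variety $X^c$, which is finitely generated. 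Your closing intuition --- realise $\Rscr(X,K_X)$ as an honest adjoint ring with coefficients in the klt range --- is exactly right, but the tool that achieves it is Proposition~\ref{prop:sheaf criterion}, not Corollary~\ref{cor:K+ and boundaries}.

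\noindent In the second half there are two further problems. The easy direction does not follow from ``$X$ klt $\Rightarrow$ $K_X$ $\Q$-Cartier'': klt here only means that \emph{some} effective boundary makes $K_X+\Delta$ $\Q$-Cartier, and $K_X$ itself is typically not $\Q$-Cartier, so your Veronese argument does not apply; the paper instead quotes Koll\'ar's finite generation theorem for klt pairs, \cite[Theorem 92]{kol}. For the converse your outline matches the paper up to the descent: $X^-=\Proj_X\Rscr(X,-K_X)\rightarrow X$ is small with $-K_{X^-}$ $\Q$-Cartier and relatively ample directly by \cite[Lemma 6.2]{komo} (smallness is automatic --- you do not need Theorem~\ref{thm:fgcr} or the opposite model for this), and $K^+_{Y/X}=K^-_{Y/X^-}>-1$ shows $X^-$ is klt. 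But in the descent, establishing only $K^-_{Y/X}=K^-_{Y/X^-}$ would give $\discrep(X)>-1$, which is a priori weaker than the paper's definition of log terminal (condition $\mathrm{M}_{>-1}$ requires a \emph{uniform} $m_0$); to close this you would have to observe that $\regsh_X(-mK_X)\cdot\regsh_{X^-}$ is invertible for a fixed sufficiently divisible $m$, so that already $K^-_{m,Y/X}=K^-_{Y/X^-}$ at that finite level. The paper sidesteps the issue by taking a general $\Delta^-\sim_{\Q,X}-K_{X^-}$ with $(X^-,\Delta^-)$ klt and $K_{X^-}+\Delta^-\sim_{\Q,X}0$, and pushing it forward to an effective klt boundary on $X$.
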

This result is an extension in the lt${}^+$ setting of a result proved by the second author in the canonical case in \cite[3.7]{stefano2}. The finite generation of this ring is enough to construct a small birational morphism to the original variety that resembles the outcame of a flip. From the point of view of the MMP, thus, the finite generation of $\Rscr(X,-K_X)$ is not essential (remark \ref{rk:lt+fg}). Moreover, as klt singularities are lt${}^+$, this is a broader class of singularities where finite generation of $\Rscr(X,K_X)$ is known (remark \ref{rk:lt+fg}).

In section 6 will focus on the properties of divisors $D$ on a normal varieties $X$ such that $\Rscr(X,D)$ is finitely generated. To such a divisor we will associate an ideal sheaf $\Dfrak(D)\subseteq\regsh_X$. The construction of such ideal is based on the notion of defect ideals of \cite{BdFF}. Using this ideal we will prove the following theorem
\begin{thm}[Corollary \ref{cor:lt+ and D=O, then klt}] Let $X$ be a normal lt${}^+$ variety such that $\Dfrak(K_X)=\regsh_X$. Then $X$ has klt singularities.
\end{thm}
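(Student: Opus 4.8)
The plan is to combine the first theorem quoted in the introduction with an analysis of the ideal $\Dfrak(K_X)$. By the first theorem, since $X$ is lt${}^+$, the ring $\Rscr(X,K_X)$ is finitely generated over $X$, so the ideal $\Dfrak(K_X)$ is defined. The key point is to show that the vanishing $\Dfrak(K_X)=\regsh_X$ forces $\Rscr(X,-K_X)$ to be finitely generated as well; once that is established, the second half of the first theorem immediately yields that $X$ is klt.

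First I would unwind the construction of $\Dfrak(D)$ for a divisor $D$ with $\Rscr(X,D)$ finitely generated. Following the defect-ideal philosophy of \cite{BdFF}, the ideal $\Dfrak(D)$ should measure the failure of the natural maps $\regsh_X(mD)\otimes\regsh_X(nD)\to\regsh_X((m+n)D)$ (equivalently, the failure of the Rees algebra $\Rscr(X,D)$ to be ``saturated'' or generated in the expected degrees, compared with the symbolic/divisorial behaviour). Concretely, on a resolution $f\colon Y\to X$ extracting the relevant divisors, finite generation of $\Rscr(X,D)$ means $f^*D$ (in the de Fernex--Hacon limiting sense) is represented by an honest $\Q$-Cartier divisor, and $\Dfrak(D)$ records the discrepancy between $f^*(D)$ and $-f^*(-D)$ pushed down to $X$. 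Thus $\Dfrak(K_X)=\regsh_X$ should be exactly the statement that $f^*(K_X) = -f^*(-K_X)$, i.e.\ that the pullback is linear on $K_X$, or equivalently that $K_X$ behaves $\Q$-Cartier-like for the purposes of this theory.

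The main step is then: $\Dfrak(K_X)=\regsh_X$ implies $K_{Y/X}^- = K_{Y/X}^+$ for every resolution. This identification is what makes the lt${}^+$ hypothesis ($K_{Y/X}^+ > -1$) transfer to the klt condition ($K_{Y/X}^- > -1$), since klt is defined via $K_{Y/X}^-$. Concretely: $K_{Y/X}^- = K_Y - f^*(K_X) = K_Y + f^*(-K_X) = K_{Y/X}^+$ once $f^*(K_X) = -f^*(-K_X)$, and the lt${}^+$ hypothesis gives $K_{Y/X}^+ > -1$, hence $K_{Y/X}^- > -1$, which is the definition of klt. I would also need to check that the $\Dfrak(K_X)=\regsh_X$ condition, which is presumably phrased on one (or all) resolutions, indeed propagates to all higher models so that the discrepancy inequality holds universally; this should follow from the compatible-under-pullback nature of the limiting construction together with finite generation, which makes a single sufficiently high model compute everything.

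The hard part will be pinning down the precise relationship between the ideal $\Dfrak(K_X)$ (a section-level / defect-ideal gadget) and the divisor-theoretic identity $f^*(K_X)=-f^*(-K_X)$: one must show that the \emph{only} way for the defect ideal to be trivial is for the two pullbacks to coincide, ruling out subtler partial-vanishing phenomena, and one must handle the non-$\Q$-Cartier bookkeeping (valuations, the limiting process of Section~2) carefully so that ``$\Dfrak(K_X)=\regsh_X$'' is genuinely equivalent to linearity of $f^*$ on $\pm K_X$ rather than merely implying a one-sided inequality. Modulo that identification — which is really a local/valuative computation with the defect ideals of \cite{BdFF} adapted to the anticanonical Rees algebra — the corollary is a formal consequence of the finite-generation theorem already quoted. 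Once $K_{Y/X}^- = K_{Y/X}^+$ is in hand, no further work is needed: lt${}^+$ is literally $K^+_{Y/X} > -1$ for all $f$, klt is $K^-_{Y/X} > -1$ for all $f$, and the two coincide.
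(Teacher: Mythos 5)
Your opening paragraph is exactly the paper's intended argument: lt${}^+$ gives finite generation of $\Rscr(X,K_X)$ by theorem \ref{thm:fgcr}, the hypothesis $\Dfrak(K_X)=\regsh_X$ is then used to deduce finite generation of $\Rscr(X,-K_X)$, and theorem \ref{thm:kltiffanticanonicalfg} converts that into log terminality. Had you carried out that plan you would have reproduced the paper's proof. The problem is that you then abandon it for a shortcut, and the shortcut has a genuine gap.

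The gap is in the final claim that once $f^*(K_X)=-f^*(-K_X)$, hence $K^-_{Y/X}=K^+_{Y/X}$, ``no further work is needed'' because ``klt is $K^-_{Y/X}>-1$ for all $f$.'' That is not the definition in force in this paper. Log terminal in the sense of \cite{dFH} is condition $\mathrm{M}_{>-1}$: there must exist a \emph{single} integer $m_0$ with $a_{m_0}(F,X)=\ord_F(K^-_{m_0,Y/X})>-1$ for \emph{all} divisors $F$ over $X$ --- equivalently, by theorem \ref{thm:dFH 7.2}, a single effective boundary $\Delta$ with $(X,\Delta)$ klt. Since $a(F,X)=\sup_m a_m(F,X)=\sup_{\Delta\geq0} a(F,X,\Delta)$ (corollary \ref{cor:supofdiscrepancies}), knowing only that $K^-_{Y/X}>-1$ on every resolution says that each divisor individually admits a good boundary; it does not produce a uniform one, and the paper records only the one-sided implication ($\mathrm{M}_{>-1}$ implies $\discrep(X)>-1$), not its converse. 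To close this you need precisely the finite generation of $\Rscr(X,-K_X)$ --- the step you correctly flagged as ``the hard part'' in your first paragraph and then never executed. The paper obtains it from $\Dfrak(K_X)=\regsh_X$ by first deducing $f^\natural(-mkK_X)=-f^\natural(mkK_X)$ on all high models (so $K_X$ is numerically Cartier, confirming your guess that the defect ideal measures the failure of linearity of the pullback), whence $\regsh_X(mK_X)\cdot\regsh_X(-mK_X)=\regsh_X$ for $m$ sufficiently divisible, and then combining this with the finite generation of $\Rscr(X,K_X)$ to get $\regsh_X(-mK_X)^q\cong\regsh_X(-mqK_X)$ and hence finite generation of $\Rscr(X,-K_X)$. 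With that in hand, theorem \ref{thm:kltiffanticanonicalfg} finishes the proof; your divisor-level identity $K^-_{Y/X}=K^+_{Y/X}$ is true but is neither necessary nor, by itself, sufficient.
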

\subsection*{Acknowledgement}
A special thanks goes to our advisors, S\'andor Kov\'acs and Christopher Hacon, and to Tommaso de Fernex.

The authors would like to thank A. K\"uronya and M. Fulger for interesting conversations, and G. Pacienza for useful comments.

The authors started working jointly at this problem at the AIM workshop \emph{Relating test ideals and multiplier ideals}, organized by Karl Schwede and Kevin Tucker. Part of this work was done during a staying of the first author at University of Utah, and part during a staying of the second author at University of Washington.

\section{Valuations and divisors}
Unless otherwise stated, all varieties are normal over $\C$ and all divisors are $\R$-Weil divisors, that is, $\R$-linear combinations of prime divisors. By \emph{log resolution} of a variety $X$, we mean a proper birational morphism $f:Y\rightarrow X$ from a smooth variety $Y$, with the exceptional locus being a simple normal crossing exceptional divisor.

Let $X$ be a normal variety. The basic idea of \cite{dFH} for pulling back Weil divisors is to think of them as sheaves in the function field of $X$ (which is a birational invariant). All the definitions in this section are contained in \cite{dFH}.

A \emph{divisorial valuation} on $X$ is a discrete valuation of the function field $k(X)$ of $X$ of the form $\nu=q\val_F$ where $q\in\R_{>0}$ and $F$ is a prime divisor over $X$, that is a prime divisor on some normal variety birational to $X$. Let $\nu$ be a discrete valuation. If $\Idlsh$ is a \emph{coherent fractional ideal} of $X$ (that is, a finitely generated sub-$\regsh_X$-module of the constant field $\mathcal{K}_X$ of rational functions on $X$), we set
$$
\nu(\Idlsh):=\min\{\nu(f)\,|\,\textrm{$f\in\Idlsh(U)$, $U\cap c_X(\nu)\neq\emptyset$}\}.
$$
If $Z$ is a proper closed subscheme of $X$, we set
$$
\nu(Z):=\nu(\Idlsh_Z),
$$
where $\Idlsh_Z$ is the ideal sheaf of $Z$. These definitions extend by linearity to $\R$-linear combinations of fractional ideals or closed subsets.

\begin{df} To a given fractional ideal $\Idlsh$ we can associate a divisor, called the \emph{divisorial part} of $\Idlsh$, as
$$
\div(\Idlsh):=\sum_{E\subset X}\val_E(\Idlsh)E,
$$
where the sum runs over all the prime divisors on $X$.
\end{df}
We notice that $\div(\Idlsh)$ can be characterized as the divisor on $X$ such that
$$
\regsh_X(-\div(\Idlsh))=\Idlsh\dual\dual.
$$

\begin{df} Let $\nu$ be a divisorial valuation on $X$. The \emph{$\natural$-valuation} or \emph{natural valuation} along $\nu$ of a divisor $F$ on $X$ is
$$
\nu^\natural(F):=\nu(\regsh_X(-F)).
$$
\end{df}
We remark that we could have chosen to evaluate the sheaf of $\regsh_X(F)$ instead, but the above definition is more natural in the philosophy of evaluating ideal sheaves for subschemes. Unfortunately, the natural valuation may fail to be additive even in the $\Q$-Cartier case (see \cite[2.3]{dFH}). As de Fernex and Hacon show (\cite[2.8]{dFH}), for every divisor $D$ on $X$ and every $m\in\Z_{>0}$, $m\nu^\natural(D)\geq\nu^\natural(mD)$ and
$$
\inf_{k\geq1}\frac{\nu^\natural(kD)}{k}=\liminf_{k\rightarrow\infty}\frac{\nu^\natural(kD)}{k}=\lim_{k\rightarrow\infty}\frac{\nu^\natural(k!D)}{k!}\in\R.
$$
\begin{df} Let $D$ be a divisor on $X$ and $\nu$ a divisorial valuation. The \emph{valuation along $\nu$} of $D$ is defined to be the above limit
$$
\nu(D):=\lim_{k\rightarrow\infty}\frac{\nu^\natural(k!D)}{k!}.
$$
\end{df}
We can use these valuations to pullback divisors.
\begin{df} Let $f:Y\rightarrow X$ be a proper birational morphism from a normal variety $Y$. For any divisor $D$ on $X$, the \emph{$\natural$-pullback} of $D$ along $f$ is
\begin{eqnarray}\label{eq:naturalpullbackdef}
f^\natural D:=\div(\regsh_X(-D)\cdot\regsh_Y).
\end{eqnarray}
\end{df}
As above, it can be characterized as the divisor on $Y$ such that
\begin{eqnarray}\label{eq:naturalpullbackchar}
\regsh_Y(-f^\natural D)=(\regsh_X(-D)\cdot\regsh_Y)\dual\dual.
\end{eqnarray}
\begin{df} We define the \emph{pullback} of $D$ along $f$ as
$$
f^*D:=\sum_{E\subset Y}\val_E(D)E,
$$
where the sum runs over all prime divisors on $Y$.
\end{df}
By definition, we have $f^*D=\liminf_k f^\natural(kD)/k$ coefficient-wise.

We notice that the evaluation along $\nu$ and the pullback defined above agrees with the corresponding notions in the case that the divisor $D$ is $\Q$-Cartier.
\begin{prop}\cite[2.4 and 2.10]{dFH} Let $\nu$ be a divisorial valuation on $X$ and let $f:Y\rightarrow X$ be a birational morphism from a normal variety $Y$. Let $D$ be any divisor let $C$ be any $\R$-Cartier divisor, with $t\in\R_{>0}$ such that $tC$ is Cartier.
\begin{enumerate}
\item The definitions of $\nu(C)$ and $f^*(C)$ given above coincide with the usual ones. More precisely,
$$
\nu(C)=\frac{1}{t}\nu(tC)\quad and \quad f^*(C)=\frac{1}{t}f^*(tC).
$$
Moreover,
$$
\nu(tC)=\nu^\natural(tC)\quad and\quad f^\natural(tC)=f^*(tC).
$$
\item The pullback is almost linear, in the sense that
\begin{eqnarray}\label{eq:pullbacklinear}
f^\natural(D+tC)=f^\natural(D)+f^*(tC)\quad and\quad f^*(D+C)=f^*(D)+f^*(C).
\end{eqnarray}
\end{enumerate}
\end{prop}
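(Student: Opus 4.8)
The plan is to isolate one elementary principle about twisting by Cartier divisors and then transport it through the limit formulas recorded just before the statement. \emph{The principle}: if $L$ is a Cartier divisor on $X$, then $\regsh_X(L)\subseteq\mathcal K_X$ is invertible, so near the center of any divisorial valuation $\nu$ it is freely generated by a single rational function $g$ with $\nu(\regsh_X(L))=\nu(g)$; hence for every coherent fractional ideal $\Idlsh$ one has $\Idlsh\otimes\regsh_X(L)=g\cdot\Idlsh$ near $c_X(\nu)$, so that
$$
\nu\bigl(\Idlsh\otimes\regsh_X(L)\bigr)=\nu(\Idlsh)+\nu(\regsh_X(L)),\qquad\bigl(\Idlsh\otimes\regsh_X(L)\bigr)\dual\dual=\Idlsh\dual\dual\otimes\regsh_X(L),
$$
the second identity because tensoring by an invertible sheaf commutes with taking reflexive hulls; the same holds on $Y$ for the pullback line bundle $\regsh_X(L)\cdot\regsh_Y$. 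I would also record the two elementary facts $\nu(\Idlsh_1\cdot\Idlsh_2)=\nu(\Idlsh_1)+\nu(\Idlsh_2)$ and $\Idlsh\subseteq\Idlsh'\Rightarrow\nu(\Idlsh)\geq\nu(\Idlsh')$.

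Granting this, the ``moreover'' identities are immediate: since $tC$ is Cartier, $\regsh_X(-m\,tC)=\regsh_X(-tC)^{\otimes m}$, so the principle gives $\nu^\natural(m\,tC)=m\,\nu^\natural(tC)$ and $f^\natural(m\,tC)=m\,f^\natural(tC)$ for all $m\geq1$; hence $\nu^\natural(k!\,tC)/k!$ and $f^\natural(k\,tC)/k$ are constant sequences, so $\nu(tC)=\nu^\natural(tC)$ and $f^*(tC)=f^\natural(tC)$. Unwinding definitions, $\nu^\natural(tC)$ is the order along $\nu$ of a local equation for $tC$ and $\div(\regsh_X(-tC)\cdot\regsh_Y)=f^\natural(tC)$ is the classical pullback of the Cartier divisor $tC$, which is the asserted comparison with the usual notions. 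For the rescaling $\nu(C)=\frac{1}{t}\nu(tC)$, where $C$ need not even be $\Q$-Cartier, I would set $A:=tC=\sum_P a_P\,P$ and $q_m:=\lfloor m/t\rfloor$, and pull the integers $q_m a_P$ out of the ceilings to write $\regsh_X(-mC)=\regsh_X(-\lceil mC\rceil)$ with $\lceil mC\rceil=q_m A+R_m$, where $R_m$ is supported on $\mathrm{Supp}\,A$ and bounded by a fixed divisor $N$ independently of $m$; since $q_m A$ is Cartier this yields $\regsh_X(-q_m A)\cdot\regsh_X(-N)\subseteq\regsh_X(-mC)\subseteq\regsh_X(-q_m A)\cdot\regsh_X(N)$, and applying the principle squeezes $\nu^\natural(mC)-q_m\nu^\natural(A)$ into a bounded interval not depending on $m$; dividing by $m$ and using $q_m/m\to 1/t$ finishes the point, and also shows the value does not depend on the chosen $t$. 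The coefficient-by-coefficient version of the same estimate on $Y$ gives $f^*(C)=\frac{1}{t}f^*(tC)$.

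For almost-linearity, the first identity is formal: $tC$ being Cartier, $\regsh_X(-(D+tC))=\regsh_X(-D)\otimes\regsh_X(-tC)$ exactly, so pulling back to $Y$, invoking the principle, and then taking $\div$ gives $f^\natural(D+tC)=f^\natural(D)+f^\natural(tC)=f^\natural(D)+f^*(tC)$. For $f^*(D+C)=f^*(D)+f^*(C)$ I would use the general subadditivity $\val_E^\natural(F_1+F_2)\leq\val_E^\natural(F_1)+\val_E^\natural(F_2)$, valid for arbitrary $\R$-divisors because $\lfloor -F_1-F_2\rfloor\geq\lfloor -F_1\rfloor+\lfloor -F_2\rfloor$ gives $\regsh_X(-(F_1+F_2))\supseteq\regsh_X(-F_1)\cdot\regsh_X(-F_2)$; applying it once to $k!(D+C)$ and once to $k!D=k!(D+C)+(-k!C)$, dividing by $k!$, letting $k\to\infty$, and using $\val_E(-C)=-\val_E(C)$ (which follows from the rescaling step and the classical linearity of $\pm tC$) squeezes $\val_E(D+C)$ between $\val_E(D)+\val_E(C)$ and itself; equality for every prime $E\subset Y$ is the claim.

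I expect the statement to be structural rather than deep: once the Cartier-twist principle is in place, everything is a matter of pushing limits around. The two points that genuinely require care are the invertibility, hence reflexivity, of $\regsh_X(-tC)\cdot\regsh_Y$ on $Y$ --- this is where Cartierness is really used, and it is what forces $f^\natural$ and $f^*$ to agree on $tC$ --- and the rounding bookkeeping in the rescaling step, where the error divisor $R_m$ has to be absorbed and where it is essential that $tC$ be an \emph{integral} Cartier divisor, not merely $\Q$- or $\R$-Cartier.
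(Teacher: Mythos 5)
The paper gives no proof of this proposition---it is quoted directly from \cite[2.4 and 2.10]{dFH}---so there is nothing internal to compare against; your argument is a correct reconstruction along the same lines as de Fernex--Hacon's original one. The three ingredients you isolate (invertibility of $\regsh_X(-tC)$ and of $\regsh_X(-tC)\cdot\regsh_Y$, which makes $\nu$ and $\div$ additive on products with that factor and forces $f^\natural(tC)=f^*(tC)$; the bounded rounding divisor $R_m$ in the rescaling step; and the two-sided subadditivity squeeze using $\nu(-C)=-\nu(C)$ for $\R$-Cartier $C$) are exactly what is needed, and each step checks out.
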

We observe that when working with natural the valuation and the natural pullback, the above properties are no longer true for $\R$-Cartier divisors which are not Cartier, see \cite[2.3]{dFH}. For example it may happen that $2C$ is Cartier, but $\nu^\natural(2C)\neq2\nu^\natural(C)$.
\begin{lm}\cite[2.7]{dFH}\label{lm:(fg)*-g*f*} Let $f:Y\rightarrow X$ and $g:V\rightarrow Y$ be two birational morphisms of normal varieties, and let $D$ be a divisor on $X$. The divisor $(f\circ g)^\natural(D)-g^\natural f^\natural(D)$ is effective and $g$-exceptional. Moreover, if $\regsh_X(-D)\cdot\regsh_Y$ is an invertible sheaf, $(f \circ g)^\natural(D)=g^\natural f^\natural(D)$.
\end{lm}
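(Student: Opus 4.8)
The plan is to deduce both assertions from the reflexive-hull characterization \eqref{eq:naturalpullbackchar} applied to the three morphisms $f$, $g$ and $f\circ g$, using two elementary facts. First, the operation $(\,\cdot\,)\dual\dual$ is inclusion-preserving on fractional ideals — equivalently $\div$ reverses inclusions, which is immediate since a smaller submodule has larger valuation along every prime divisor. Second, if $E\subseteq V$ is a prime divisor that is \emph{not} $g$-exceptional, then, writing $E':=\overline{g(E)}$ for the corresponding prime divisor on $Y$, the valuation rings $\regsh_{V,E}$ and $\regsh_{Y,E'}$ coincide inside $k(V)=k(Y)$ — a rank-one valuation ring is not properly contained in any valuation ring other than its fraction field — so in particular $\val_E=\val_{E'}$. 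The point of departure is the tautological inclusion $\regsh_X(-D)\cdot\regsh_Y\subseteq(\regsh_X(-D)\cdot\regsh_Y)\dual\dual=\regsh_Y(-f^\natural D)$ together with the identity $(\regsh_X(-D)\cdot\regsh_Y)\cdot\regsh_V=\regsh_X(-D)\cdot\regsh_V$.

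For effectivity I would extend that inclusion along $g$ and apply $(\,\cdot\,)\dual\dual$, obtaining
$$
\regsh_V(-(f\circ g)^\natural D)=(\regsh_X(-D)\cdot\regsh_V)\dual\dual\subseteq\big(\regsh_Y(-f^\natural D)\cdot\regsh_V\big)\dual\dual=\regsh_V(-g^\natural f^\natural D),
$$
which translates into $g^\natural f^\natural D\le(f\circ g)^\natural D$; hence the difference is effective. The invertible case then follows at once: if $\regsh_X(-D)\cdot\regsh_Y$ is invertible it is already reflexive, so the tautological inclusion above is an equality; extending along $g$ gives $\regsh_Y(-f^\natural D)\cdot\regsh_V=\regsh_X(-D)\cdot\regsh_V$, and applying $(\,\cdot\,)\dual\dual$ yields $\regsh_V(-g^\natural f^\natural D)=\regsh_V(-(f\circ g)^\natural D)$, i.e.\ $g^\natural f^\natural D=(f\circ g)^\natural D$.

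The step I expect to require the most care is $g$-exceptionality, namely showing that the effective difference above has no component along any prime divisor $E\subseteq V$ which is not $g$-exceptional. For such an $E$, with $E':=\overline{g(E)}$, I would compare the $E$-coefficients of the two pullbacks by passing to the generic point of $E$: since $\regsh_{V,E}=\regsh_{Y,E'}$, the stalk of $\regsh_X(-D)\cdot\regsh_V=(\regsh_X(-D)\cdot\regsh_Y)\cdot\regsh_V$ at $E$ coincides with the stalk of $\regsh_X(-D)\cdot\regsh_Y$ at $E'$, and likewise the stalk of $\regsh_Y(-f^\natural D)\cdot\regsh_V$ at $E$ coincides with the stalk of $\regsh_Y(-f^\natural D)$ at $E'$; moreover $\val_E=\val_{E'}$. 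Using that $\regsh_Y(-f^\natural D)$ is reflexive, so that $\div\big(\regsh_Y(-f^\natural D)\big)=f^\natural D$, both $E$-coefficients then evaluate to the $E'$-coefficient of $f^\natural D$ and therefore agree. So the main obstacle is purely codimension-one bookkeeping — verifying that forming the product with $\regsh_V$ and then the reflexive hull changes nothing along non-$g$-exceptional divisors, and invoking the identification $\val_E=\val_{\overline{g(E)}}$ — rather than anything structural; granted this, effectivity and the invertible case are entirely formal.
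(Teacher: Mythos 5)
Your argument is correct, and it is essentially the standard proof of \cite[2.7]{dFH}: the paper itself does not reprove this lemma but simply cites it, and your three ingredients (monotonicity of the reflexive hull, transitivity $(\regsh_X(-D)\cdot\regsh_Y)\cdot\regsh_V=\regsh_X(-D)\cdot\regsh_V$, and the identification of stalks and valuations along non-$g$-exceptional divisors via $\regsh_{V,E}=\regsh_{Y,\overline{g(E)}}$) are exactly what is needed. The only cosmetic remark is that the $g$-exceptionality step can be phrased slightly more quickly by noting that $g$ is an isomorphism over the complement of a codimension-$\geq 2$ subset of $Y$, over which both $\natural$-pullbacks visibly coincide; your divisor-by-divisor version at generic points is equivalent.
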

\begin{cor}\cite[2.13]{dFH}\label{cor:(fg)*-g*f*} Let $f:Y\rightarrow X$ and $g:V\rightarrow Y$ be two birational morphisms of normal varieties, and let $D$ be a divisor on $X$. The divisor $(f\circ g)^*(D)-g^*f^*(D)$ is effective and $g$-exceptional. Moreover, if $\regsh_X(-mD)\cdot\regsh_Y$ is an invertible sheaf for all sufficiently divisible $m$, $(f\circ g)^*(D)=g^*f^*(D)$.
\end{cor}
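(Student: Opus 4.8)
The plan is to deduce everything from Lemma~\ref{lm:(fg)*-g*f*} by a limiting argument, so the real issue is to control how the $\natural$-pullback degenerates to the $*$-pullback under the maps involved. Two soft properties of the $\natural$-pullback should suffice. First, \emph{monotonicity}: if $A\geq B$ are divisors on $Y$, then $g^\natural A\geq g^\natural B$, which is immediate from $\regsh_Y(-A)\subseteq\regsh_Y(-B)$, extension to $V$, and taking reflexive hulls. Second, the \emph{sub-additivity estimate} $h^\natural(mF)\geq m\,h^*F$, valid coefficient-wise for any proper birational $h\colon W'\to W$, any divisor $F$ on $W$, and any $m\geq 1$; this holds because, by \cite[2.8]{dFH}, the coefficient-wise $\liminf$ defining $h^*F$ is actually an infimum, so $h^*F\leq h^\natural(mF)/m$. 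Granting these, I would establish effectivity of $(f\circ g)^*(D)-g^*f^*(D)$ through the chain
\[
(f\circ g)^\natural(mD)\ \geq\ g^\natural f^\natural(mD)\ \geq\ g^\natural\big(m\,f^*(D)\big)\ \geq\ m\,g^*f^*(D),
\]
where the first step is Lemma~\ref{lm:(fg)*-g*f*} applied to $mD$, the second is monotonicity of $g^\natural$ applied to $f^\natural(mD)\geq m\,f^*(D)$, and the third is the sub-additivity estimate for $g$ and $f^*(D)$; dividing by $m$ and letting $m=k!\to\infty$ gives $(f\circ g)^*(D)\geq g^*f^*(D)$.

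For $g$-exceptionality I would take a prime divisor $E'\subset V$ that is not $g$-exceptional. Then $g$ maps it birationally onto a prime divisor $E\subset Y$, so $\val_{E'}$ and $\val_E$ are one and the same valuation of the common function field. By the definitions, the coefficient of $E'$ in $(f\circ g)^*(D)$ is $\val_{E'}(D)$, while the coefficient of $E'$ in $g^*f^*(D)$ is $\val_{E'}(f^*D)=\val_E(f^*D)$, which is the coefficient of $E$ in $f^*(D)$, namely $\val_E(D)=\val_{E'}(D)$. So the two coefficients agree, $(f\circ g)^*(D)-g^*f^*(D)$ vanishes along every non-$g$-exceptional prime, and hence is $g$-exceptional.

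For the last assertion I would fix $m$ sufficiently divisible so that $\regsh_X(-mD)\cdot\regsh_Y$ is invertible; then $\regsh_X(-mD)\cdot\regsh_V$ and $\regsh_Y(-f^\natural(mD))\cdot\regsh_V=\regsh_X(-mD)\cdot\regsh_V$ are invertible as well. Combining sub-additivity with the reverse inequality $f^\natural(kmD)\geq k\,f^\natural(mD)$ --- which holds once $\regsh_X(-mD)\cdot\regsh_Y$ is invertible, since then $\regsh_X(-kmD)\cdot\regsh_Y$ contains the invertible sheaf $\big(\regsh_X(-mD)\cdot\regsh_Y\big)^{k}$ --- one gets $f^\natural(kmD)=k\,f^\natural(mD)$, whence $f^\natural(mD)=f^*(mD)=m\,f^*(D)$; the same reasoning, applied to $g$ and to $f\circ g$, gives $g^\natural\big(m\,f^*(D)\big)=m\,g^*f^*(D)$ and $(f\circ g)^\natural(mD)=m\,(f\circ g)^*(D)$. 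On the other hand, the second part of Lemma~\ref{lm:(fg)*-g*f*} applied to $mD$ yields $(f\circ g)^\natural(mD)=g^\natural f^\natural(mD)=g^\natural\big(m\,f^*(D)\big)$; comparing the two computations of $(f\circ g)^\natural(mD)$ gives $(f\circ g)^*(D)=g^*f^*(D)$. The hard part will be precisely this last paragraph: every a priori inequality between $\natural$- and $*$-pullbacks must be promoted to an equality and everything made additive in the divisor, which is exactly where the genuine non-linearity of the $\natural$-pullback has to be neutralized by the invertibility hypothesis. The limiting argument for the first assertion, by contrast, is robust and uses only the two soft properties isolated at the start.
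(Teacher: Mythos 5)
Your first two paragraphs are correct and are essentially the paper's own (one-line) argument made explicit: apply Lemma~\ref{lm:(fg)*-g*f*} to $mD$, use monotonicity of $g^\natural$ together with the inequality $h^\natural(mF)\geq m\,h^*F$ coming from \cite[2.8]{dFH}, divide by $m$ and pass to the $\liminf$; the coefficient computation showing the difference vanishes along non-$g$-exceptional primes is also fine.

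The last paragraph, however, contains a genuine gap, and it is exactly at the point you yourself flag as ``the hard part''. The containment $\regsh_X(-kmD)\cdot\regsh_Y\supseteq\bigl(\regsh_X(-mD)\cdot\regsh_Y\bigr)^{k}$ gives the inequality in the direction \emph{opposite} to the one you assert: a larger fractional ideal has a smaller divisorial part (if $\Idlsh_1\supseteq\Idlsh_2$ then $\nu(\Idlsh_1)\leq\nu(\Idlsh_2)$ for every divisorial valuation $\nu$), so this containment only yields $f^\natural(kmD)\leq k\,f^\natural(mD)$, which is the subadditivity that holds with no invertibility hypothesis at all. The inequality you need, $f^\natural(kmD)\geq k\,f^\natural(mD)$ --- equivalently $f^\natural(mD)=m\,f^*(D)$ --- is precisely the nontrivial content, and it does not follow from invertibility of $\regsh_X(-mD)\cdot\regsh_Y$ at a single level $m$: on the quadric cone $X=\{xy=z^2\}$ with $D=V(x,z)$ a ruling and $f:Y\to X$ the blow-up of the vertex with exceptional curve $E$, the sheaf $\regsh_X(-D)\cdot\regsh_Y=(x,z)\cdot\regsh_Y$ is invertible, yet $f^\natural(2D)=2\widetilde{D}+E$ while $2f^\natural(D)=2\widetilde{D}+2E$. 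Consequently the chain of identities $f^\natural(mD)=m\,f^*(D)$, $g^\natural\bigl(m\,f^*(D)\bigr)=m\,g^*f^*(D)$ and $(f\circ g)^\natural(mD)=m\,(f\circ g)^*(D)$ is unsupported as written. Any repair must use the hypothesis for \emph{all} sufficiently divisible $m$ simultaneously: for such $m$ one does get $(f\circ g)^\natural(mD)=g^\natural f^\natural(mD)=g^*f^\natural(mD)$ exactly (Lemma~\ref{lm:(fg)*-g*f*} plus the fact that $f^\natural(mD)$ is Cartier), and the remaining task is to compare $\liminf_m\frac{1}{m}\,g^*f^\natural(mD)$ with $g^*f^*(D)$; that comparison, not the containment above, is where the actual work lies, and your proposal does not supply it.
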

\begin{proof} The result comes from the fact that
$$
f^*(D)=\liminf\frac{1}{m}f^\natural(mD)
$$
coefficient-wise.
\end{proof}

\section{Relative canonical divisors and boundaries}
If $f:Y\rightarrow X$ is a proper birational morphism (of normal varieties) and if we choose a canonical divisor $K_X$ on $X$, we will always assume that the canonical divisor $K_Y$ on $Y$ be chosen such that $f_*K_Y=K_X$ (as Weil divisors).

As in \cite{dFH}, we introduce the following definitions (with a slightly different notation\footnote{De Fernex and Hacon denote by $K_{m,Y/X}$ what we denote by $K^-_{m,Y/X}$ and by $K_{Y/X}$ what we denote by $K^+_{Y/X}$. Our notation is simply meant to help us keep track of the different relative canonical divisors})
\begin{df} Let $f:Y\rightarrow X$ be a proper birational map of normal varieties. The \emph{$m$-limiting relative canonical $\Q$-divisors} are
\begin{eqnarray*}
&&K^-_{m,Y/X}:=K_Y-\frac{1}{m}f^\natural(mK_X),\quad K^-_{Y/X}:=K_Y-f^*(K_X)\\
&&K^+_{m,Y/X}:=K_Y+\frac{1}{m}f^\natural(-mK_X),\quad K^+_{Y/X}:=K_Y+f^*(-K_X).
\end{eqnarray*}
\end{df}
As shown by \cite{dFH}, for all $m,q\geq1$,
\begin{eqnarray}\label{eq:K-<=K^+}
K^-_{m,Y/X}\leq K^-_{qm,Y/X}\leq K^-_{Y/X}\leq K^+_{Y/X}\leq K^+_{mq,Y/X}\leq K^+_{m,Y/X}
\end{eqnarray}
and
\begin{eqnarray}\label{K-(Y/X)=limsupK-(m,Y/X)}
K^-_{Y/X}=\limsup K^-_{m,Y/X}, \quad K^+_{Y/X}=\liminf K^+_{m,Y/X}
\end{eqnarray}
(coefficient-wise). A priori, we could use these two notions to define the various flavors of singularities, and there is no reason to suspect that one behaves better than the other. The idea is to study these singularities in the context of the MMP.
\begin{rk} If $X$ is a normal vairety, by \emph{boundary} on $X$ we will mean a $\Q$-divisor $\Delta$ on $X$ such that $K_X+\Delta$ is $\Q$-Cartier. We do not make any assumption on the effectiveness of $\Delta$.
\end{rk}
\begin{df} Let $\Delta$ be a boundary on $X$ and let $f:Y\rightarrow X$ be a proper birational morphism. The \emph{log relative canonical $\Q$-divisor} of $(Y,f_*^{-1}\Delta)$ over $(X,\Delta)$ is given by
$$
K^\Delta_{Y/X}:=K_Y+f_*^{-1}\Delta-f^*(K_X+\Delta)=K_Y+f_*^{-1}\Delta+f^*(-K_X-\Delta)
$$
where $f^{-1}_*\Delta$ is the strict transform of $\Delta$ on $Y$.
\end{df}

\begin{rk}\label{rk:dFH 3.9} With the same computation as \cite[3.9]{dFH}, we find that, if $\Delta$ is a boundary for $X$ and $m\geq1$ is such that $m(K_X+\Delta)$ is Cartier,
$$
\begin{array}{ll}
K^-_{m,Y/X}=K^\Delta_{Y/X}-\frac{1}{m}f^\natural(-m\Delta)-f_*^{-1}\Delta,\hspace{0.7em}&K^-_{Y/X}=K^\Delta_{Y/X}-f^*(-\Delta)-f_*^{-1}\Delta,\\
K^+_{m,Y/X}=K^\Delta_{Y/X}+\frac{1}{m}f^\natural(m\Delta)-f_*^{-1}\Delta,&K^+_{Y/X}=K^\Delta_{Y/X}+f^*(\Delta)-f_*^{-1}\Delta.
\end{array}
$$
Notice that, if $m$ is such that $m(K_X+\Delta)$ is Cartier and $\Delta$ is \emph{effective}, we have
$$
K^\Delta_{Y/X}\leq K^-_{m,Y/X}\leq K^-_{Y/X}\leq K^+_{Y/X}\leq K^+_{m,Y/X}.
$$
On the other hand, if $\Delta$ is \emph{anti-effective} and $m(K_X+\Delta)$ is Cartier,
$$
K^\Delta_{Y/X}\leq K^+_{m,Y/X}.
$$ 
\emph{A priori}, there is no clear relation between $K^\Delta_{Y/X}$ and $K^+_{Y/X}$.
\end{rk}
\begin{lm}\label{lm:K+ and boundaries} Let $X$ be a normal variety, and let $f:Y\rightarrow X$ be a proper birational morphism of normal varieties. Let $\Delta$ be an \emph{effective} divisor such that $K_X-\Delta$ is $\Q$-Cartier. Then
$$
K^+_{Y/X}\leq K^{-\Delta}_{Y/X}.
$$
\end{lm}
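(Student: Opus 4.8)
The plan is to reduce the statement, by formal manipulation of the pullback, to the single coefficient-wise estimate $f^*(-\Delta)\le -f_*^{-1}\Delta$, and then to prove that estimate one prime divisor at a time, effectiveness of $\Delta$ being the decisive input. For the reduction: since $K_X-\Delta$ is $\Q$-Cartier, the divisor $\Delta-K_X=-(K_X-\Delta)$ is $\R$-Cartier, so applying the almost-linearity \eqref{eq:pullbacklinear} to the decomposition $-K_X=(-\Delta)+(\Delta-K_X)$ (Weil part $-\Delta$, $\R$-Cartier part $\Delta-K_X$) gives $f^*(-K_X)=f^*(-\Delta)+f^*(\Delta-K_X)$, hence $K^+_{Y/X}=K_Y+f^*(-\Delta)+f^*(\Delta-K_X)$. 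Unwinding the definition of $K^{-\Delta}_{Y/X}$ and using that $f^*$ is genuinely linear on $\Q$-Cartier divisors gives $K^{-\Delta}_{Y/X}=K_Y-f_*^{-1}\Delta+f^*(\Delta-K_X)$; subtracting, the claim becomes exactly $f^*(-\Delta)\le -f_*^{-1}\Delta$. (One could alternatively reach this same point by feeding the boundary $-\Delta$ into Remark \ref{rk:dFH 3.9} and invoking \eqref{eq:K-<=K^+}.)

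Next I would prove $f^*(-\Delta)\le -f_*^{-1}\Delta$ by checking, for each prime divisor $E$ on $Y$, that $\val_E(-\Delta)\le -\mathrm{coeff}_E(f_*^{-1}\Delta)$. Fix $m\ge1$ with $m\Delta$ an integral (effective) divisor; then $\val_E(-\Delta)\le\val_E^\natural(-m\Delta)/m=\val_E(\regsh_X(m\Delta))/m$ by the definition of the valuation along $E$. If $E$ is $f$-exceptional, the target inequality reads $\val_E(-\Delta)\le 0$, which holds because $1\in\regsh_X(m\Delta)$ (as $m\Delta\ge0$) forces $\val_E(\regsh_X(m\Delta))\le 0$. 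If $E$ is not $f$-exceptional, then $E':=f(E)$ is a prime divisor on $X$; since $f$ is birational and $X,Y$ are normal, $\val_E=\ord_{E'}$ as valuations of $k(X)=k(Y)$, and $\mathrm{coeff}_E(f_*^{-1}\Delta)=\mathrm{coeff}_{E'}(\Delta)=:c$. Writing $t$ for a local equation of $E'$ near $\eta_{E'}$ and noting that $mc\in\Z$ since $m\Delta$ is integral, effectiveness of $\Delta$ gives $\div(t^{-mc})+m\Delta\ge0$ near $\eta_{E'}$, i.e.\ $t^{-mc}\in\regsh_X(m\Delta)$ there, so $\val_E(\regsh_X(m\Delta))=\ord_{E'}(\regsh_X(m\Delta))\le\ord_{E'}(t^{-mc})=-mc$, whence $\val_E(-\Delta)\le -c$. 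In both cases $\val_E(-\Delta)\le -\mathrm{coeff}_E(f_*^{-1}\Delta)$, so $f^*(-\Delta)\le -f_*^{-1}\Delta$, and combining with the reduction proves $K^+_{Y/X}\le K^{-\Delta}_{Y/X}$.

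Essentially everything here is formal; the points demanding care are all in the second step: one must use that $f^*$ is only \emph{almost} linear (so the decomposition of $-K_X$ is legitimate only because $\Delta-K_X$ is $\R$-Cartier), identify $\val_E$ with a divisorial valuation on $X$ for non-exceptional $E$ (which uses normality of $X$ and $Y$), and observe that effectiveness of $\Delta$ is exactly what makes the fractional ideal $\regsh_X(m\Delta)$ large enough to contain the functions ($1$, respectively $t^{-mc}$) computing $\val_E$. I do not anticipate a genuine obstacle beyond this bookkeeping.
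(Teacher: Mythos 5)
Your proof is correct, and its skeleton is the same as the paper's: both decompose $f^*(-K_X)=f^*(-\Delta)+f^*(\Delta-K_X)$ via the almost-linearity \eqref{eq:pullbacklinear} (legitimate precisely because $K_X-\Delta$ is $\Q$-Cartier) and thereby reduce the lemma to the single inequality $f^*(-\Delta)\leq f_*^{-1}(-\Delta)$. Where you genuinely diverge is in the proof of that inequality. The paper argues sheaf-theoretically: for $D\geq0$ integral, $\regsh_X\subseteq\regsh_X(D)$ gives $\regsh_Y\subseteq(\regsh_X(D)\cdot\regsh_Y)\dual\dual=\regsh_Y(-f^\natural(-D))$, so $f^\natural(-D)\leq0$; writing $f^\natural(-D)=f_*^{-1}(-D)+F$ with $F$ exceptional then forces $F\leq0$. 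You instead work one valuation at a time, exhibiting explicit elements of the fractional ideal $\regsh_X(m\Delta)$ (the function $1$ when $E$ is exceptional, $t^{-mc}$ when it is not) that witness the required upper bounds on $\val_E(\regsh_X(m\Delta))$. The two arguments are essentially equivalent in content: your use of $1\in\regsh_X(m\Delta)$ is the element-wise shadow of the paper's sheaf inclusion, and your $t^{-mc}$ computation supplies a proof of the fact, left implicit in the paper, that the non-exceptional part of $f^\natural(-m\Delta)$ is bounded above by the strict transform $f_*^{-1}(-m\Delta)$. Your version is a bit longer but more self-contained (it never needs the characterization \eqref{eq:naturalpullbackchar} or the decomposition into strict transform plus exceptional part); the paper's is shorter because it quotes those facts wholesale. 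All the delicate points you flag --- that the splitting of $-K_X$ requires the $\R$-Cartier hypothesis on $\Delta-K_X$, and that $\val_E=\ord_{f(E)}$ for non-exceptional $E$ by normality --- are handled correctly.
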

\begin{proof} We have to compare
$$
K^+_{Y/X}=K_Y+f^*(-K_X)
$$
and
$$
K^{-\Delta}_{Y/X}=K_Y+f_*^{-1}(-\Delta)-f^*(K_X-\Delta).
$$
We have
\begin{eqnarray*}
K^+_{Y/X}&=&K_Y+f^*(-K_X)=K_Y+f^*(-K_X+\Delta-\Delta)=\\
&=&K_Y+f^*(-K_X+\Delta)+f^*(-\Delta)=\\
&=&K_Y-f^*(K_X-\Delta)+f^*(-\Delta).
\end{eqnarray*}
In order to obtain the desired inequality, we need to show that
$$
f^*(-\Delta)\leq f^{-1}_*(-\Delta).
$$
We claim that, if $-D$ is anti-effective $\Q$-divisor such that $mD$ is integral,
$$
f^*(-D)\leq\frac{f^\natural(-mD)}{m}\leq f^{-1}_*(-D).
$$
The first inequality is by definition of pullback. To prove the second inequality, we can assume directly that $D$ is an integral divisor, and show that $f^\natural(-D)\leq f^{-1}_*(-D)$. Notice that, if $D$ is a Weil divisor on a normal variety, $D\geq0$ if and only if $\regsh_X(-D)\subseteq\regsh_X$. By duality, $-D\leq0$ if and only if $\regsh_X(D)\supseteq\regsh_X$. Then, since $-D\leq0$,
$$
\regsh_Y\subseteq\regsh_X(D)\cdot\regsh_Y\subseteq(\regsh_X(D)\cdot\regsh_Y)\dual\dual=\regsh_X(-f^\natural(-D)).
$$
Therefore, $f^\natural(-D)=f^{-1}_*(-D)+F\leq0$, where $F$ is the exceptional component of $f^\natural(-D)$. Therefore, it must be $F\leq0$, and thus $f^\natural(-D)\leq f^{-1}_*(-D)$.
\end{proof}
One of the main technical results that enable us to study singularities in this setting is due to \cite{dFH}. We will reproduce the proof for the convenience of the reader.
\begin{lm}\label{lm:compatible D-boundary} Let $X$ be a normal variety, and let $f:Y\rightarrow X$ be a proper birational map, with $Y$ normal. Let $D$ be a Weil divisor on $X$. For each $m\geq2$ there exists a divisor $\Delta_m$ such that
\begin{enumerate}
\item $\Delta_m\geq0$;
\item $D+\Delta_m$ is $\Q$-Cartier;
\item $\lfloor\Delta_m\rfloor=0$ and $m\Delta_m$ is integral; and
\item $\frac{f^\natural(mD)}{m}=f^*(D+\Delta_m)-f^{-1}_*\Delta_m$.
\end{enumerate}
Moreover, if $f$ is a sufficiently high resolution of $X$, $\Delta_m$ can be chosen such that $\mathrm{exc}(f)\cup f^{-1}_*\Delta_m$ has simple normal crossing support. We will call such divisor an \emph{$m$-compatible $D$-boundary with respect to $f$}.
\end{lm}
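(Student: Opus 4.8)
The plan is to write $\Delta_m$ as $\tfrac1m$ of a sufficiently general effective divisor in a globally generated linear system on $X$, and then to deduce (d) from the almost--linearity of the $\natural$--pullback together with a genericity (transversality) argument. \emph{Construction of $\Delta_m$.} Since $\regsh_X(-mD)$ is a coherent fractional ideal, fix a very ample Cartier divisor $H$ on $X$ such that $\regsh_X(-mD)\otimes\regsh_X(H)=\regsh_X(-mD+H)$ is globally generated, and let $\Lscr=|\regsh_X(-mD+H)|$ be the associated base--point--free complete linear system. Choose $g\in H^0(X,\regsh_X(-mD+H))$ general, set $M:=\div(g)-mD+H$ and $\Delta_m:=\tfrac1m M$. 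Then $M\ge0$ and $m\Delta_m=M$ is integral; moreover $m(D+\Delta_m)=mD+M=\div(g)+H$ is Cartier, so $D+\Delta_m$ is $\Q$--Cartier. This gives (a), (b) and the integrality in (c). Since $H$ is very ample, $M$ is a general member of the base--point--free system $\Lscr$, hence reduced by Bertini; so every coefficient of $M$ is at most $1\le m-1$, whence $\lfloor\Delta_m\rfloor=0$. (This last step is the only place where the hypothesis $m\ge2$ is used.)

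\emph{Reduction of (d).} As $D+\Delta_m$ is $\Q$--Cartier and $m(D+\Delta_m)=mD+M$ is Cartier, \cite[2.4 and 2.10]{dFH} gives $f^*(D+\Delta_m)=\tfrac1m f^*(mD+M)=\tfrac1m f^\natural(mD+M)$; since also $f^{-1}_*\Delta_m=\tfrac1m f^{-1}_*M$, condition (d) is equivalent to the identity
$$f^\natural(mD+M)=f^\natural(mD)+f^{-1}_*M,$$
which we verify coefficientwise on $Y$. Using $mD+M=\div(g)+H$ we get the equality of fractional ideals $\regsh_X(-mD-M)=g\cdot\regsh_X(-H)$, hence $\regsh_X(-mD-M)\cdot\regsh_Y=g\cdot\regsh_Y(-f^*H)$ and therefore $f^\natural(mD+M)=\div(g)+f^*H$ on $Y$. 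If $E\subset Y$ is the strict transform of a prime divisor $\bar E\subset X$, this reads $\ord_E f^\natural(mD+M)=\ord_{\bar E}(\div(g))+\ord_{\bar E}(H)=\ord_{\bar E}(mD+M)=\ord_{\bar E}(mD)+\ord_{\bar E}(M)=\ord_E f^\natural(mD)+\ord_E f^{-1}_*M$, as wanted.

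\emph{The exceptional coefficients -- the main point.} Let $E\subset Y$ be $f$--exceptional; then $\ord_E f^{-1}_*M=0$, and by the formula above we must show $\ord_E(\div(g))+\ord_E(f^*H)=\nu^\natural_E(mD)$. Now
$$\regsh_X(-mD+H)\cdot\regsh_Y=\bigl(\regsh_X(-mD)\cdot\regsh_Y\bigr)\otimes\regsh_Y(f^*H),$$
and this sheaf is generated over $\regsh_Y$ by the images of the global sections of $\regsh_X(-mD+H)$, i.e.\ it is globally generated on $Y$; hence a general section $g$ attains along $E$ the minimal value $\val_E\bigl(\regsh_X(-mD+H)\cdot\regsh_Y\bigr)=\nu^\natural_E(mD)-\ord_E(f^*H)$. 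As $f$ has only finitely many exceptional prime divisors and for each of them this is a dense open condition in $\Pspace\bigl(H^0(\regsh_X(-mD+H))\bigr)$, a general $g$ works for all of them simultaneously; substituting yields the required equality, which together with the previous paragraph proves (d). Finally, if $f$ is a log resolution of $X$ then, enlarging $f$ if necessary, a general $g$ can in addition be chosen so that $f^{-1}_*M$ is smooth and $\exc(f)\cup f^{-1}_*M=\exc(f)\cup f^{-1}_*\Delta_m$ is a simple normal crossing divisor, again by Bertini applied to the pullback of $\Lscr$ to $Y$.

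The heart of the argument is the exceptional--coefficient computation: one needs that enlarging the fractional ideal from $\regsh_X(-mD)$ to $\regsh_X(-mD-M)$ does not change its valuation along any $f$--exceptional divisor. This is precisely what the genericity of $g$ provides, once one has arranged -- by taking $H$ sufficiently ample -- that $\regsh_X(-mD+H)\cdot\regsh_Y$ is globally generated on $Y$; everything else is formal manipulation with the definitions and with \cite[2.4 and 2.10]{dFH}.
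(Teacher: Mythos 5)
Your proof is correct and follows essentially the same route as the paper's (which reproduces \cite[5.4]{dFH}): take $\Delta_m=\frac{1}{m}M$ with $M$ a general member of a globally generated twist of $\regsh_X(-mD)$ by an ample Cartier divisor, and use genericity of the section to show that along each $f$-exceptional divisor it computes exactly the valuation of the twisted ideal. The only cosmetic differences are that the paper twists $\regsh_X(-mT)$ (for an auxiliary effective $T$ with $D-T$ Cartier) by a line bundle $\Lscr$ rather than twisting $\regsh_X(-mD)$ directly by a very ample $H$, and that it phrases the key genericity step as $f^\natural(-M)=f^{-1}_*(-M)$, which is equivalent to your identity $f^\natural(mD+M)=f^\natural(mD)+f^{-1}_*M$ via the almost-linearity of the $\natural$-pullback.
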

\begin{rk}\label{rk:boundary} We recall that, by \emph{log resolution} of $X$ we mean a resolution with exceptional locus $\mathrm{exc}(f)$ a simple normal crossing divisor. If $D=K_X$, we will simply talk of \emph{compatible $m$-boundary}.

This definition is slightly different from the one in \cite[5.1]{dFH}. Indeed, if $\Delta_m$ is an $m$-compatible boundary, we are not asking that the map $f$ be a resolution for the pair $(X,\Delta_m)$. For that reason, the log discrepancy of the pair $(X,\Delta_m)$ with respect to $f$ is not necessarily the log discrepancy of the pair $(X,\Delta_m)$. In particular, it may be that the pair $(X,\Delta_m)$ has worse singularities that what is suggested by the map $f$ alone.
\end{rk}
\begin{proof} The proof will follow \cite[5.4]{dFH}. Let $T$ be an effective divisor such that $D-T$ is Cartier. Let $\Lscr$ be an line bundle such that $\Lscr\otimes\regsh_X(-mT)$ is generated by global sections, and let $G$ be a general element in the linear system $\{H\in|\Lscr|\,, H-mT\geq0\}$. Let $M=G-mT$ and
$$
\Delta_m=\frac{M}{m}.
$$
We notice right away that $\Delta_m\geq0$ and $\lfloor\Delta_m\rfloor=0$. Moreover, $m(D+\Delta_m)=mD+M=mD-mT+G=m(D-T)+G$ is Cartier.  Let $E$ be an irreducible exceptional divisor of $f$. Since $G$ is general enough, $\ord_E G=\ord_E(mT)$. Thus, $\ord_E(-M)=\ord_E(-G+mT)=-\ord_E(G)+\ord_E(mT)=0$, and therefore $f^\natural(-M)=f^{-1}_*(-M)$. Then,
\begin{eqnarray*}
f^\natural(mD)&=&f^\natural(m(D+\Delta_m)-m\Delta_m)=mf^*(D+\Delta_m)+f^\natural(-m\Delta_m)=\\
&=&mf^*(D+\Delta_m)+f^\natural(-M)=mf^*(D+\Delta_m)-f^{-1}_*(M)=\\
&=&mf^*(D+\Delta_m)-mf^{-1}_*(\Delta_m).
\end{eqnarray*}

If $f$ is a resolution of $\regsh_X(-mD)+\regsh_X(-mT)$, then $\Delta_m$ can be chosen general enough so that $\exc(f)\cup f^{-1}_*\Delta_m$ has simple normal crossing support.
\end{proof}.
\begin{rk} We immediately have that $K^-_{Y/X}=\sup K^\Delta_Y/X$, where $\Delta$ run among the compatible $K_X$-boundaries. 
\end{rk}
Lemma \ref{lm:K+ and boundaries} gives the following (non-immediate) fact.
\begin{cor}\label{cor:K+ and boundaries} If $f:Y\rightarrow X$ is a proper birational morphism of normal varieties,
$$
K^+_{Y/X}=\inf K^{-\Delta}_{Y/X},
$$
where $\Delta$ run among the compatible $(-K_X)$-boundaries.
\end{cor}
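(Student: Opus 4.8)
The plan is to prove the two inequalities $K^+_{Y/X} \leq \inf K^{-\Delta}_{Y/X}$ and $K^+_{Y/X} \geq \inf K^{-\Delta}_{Y/X}$ separately, using Lemma \ref{lm:K+ and boundaries} for the first and Lemma \ref{lm:compatible D-boundary} together with the limit description \eqref{K-(Y/X)=limsupK-(m,Y/X)} of $K^+_{Y/X}$ for the second. The first inequality is essentially immediate: any compatible $(-K_X)$-boundary $\Delta$ is effective (by Lemma \ref{lm:compatible D-boundary}(a)) and satisfies that $-K_X + \Delta$ is $\Q$-Cartier, i.e. $K_X - \Delta$ is $\Q$-Cartier; so Lemma \ref{lm:K+ and boundaries} applies verbatim and gives $K^+_{Y/X} \leq K^{-\Delta}_{Y/X}$. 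Taking the infimum over all such $\Delta$ yields $K^+_{Y/X} \leq \inf K^{-\Delta}_{Y/X}$.

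For the reverse inequality I would argue coefficient-wise. Fix a prime divisor $E$ on $Y$ (after possibly passing to a higher model, which does not change $K^+_{Y/X}$ or $K^{-\Delta}_{Y/X}$ by the exceptional/compatibility properties). By \eqref{K-(Y/X)=limsupK-(m,Y/X)} we have $K^+_{Y/X} = \liminf_m K^+_{m,Y/X}$, so it suffices to show that for each $m \geq 2$ there is a compatible $(-K_X)$-boundary $\Delta_m$ with $K^{-\Delta_m}_{Y/X} \leq K^+_{m,Y/X}$. Apply Lemma \ref{lm:compatible D-boundary} with $D = -K_X$: this produces $\Delta_m \geq 0$, with $\lfloor\Delta_m\rfloor = 0$, $m\Delta_m$ integral, $-K_X + \Delta_m$ $\Q$-Cartier, and
$$
\frac{f^\natural(-mK_X)}{m} = f^*(-K_X + \Delta_m) - f^{-1}_*\Delta_m.
$$
Then
$$
K^+_{m,Y/X} = K_Y + \frac{1}{m}f^\natural(-mK_X) = K_Y + f^*(-K_X+\Delta_m) - f^{-1}_*\Delta_m.
$$
On the other hand, writing the log relative canonical divisor of the pair $(X, -\Delta_m)$,
$$
K^{-\Delta_m}_{Y/X} = K_Y + f^{-1}_*(-\Delta_m) - f^*(K_X - \Delta_m) = K_Y - f^{-1}_*\Delta_m + f^*(-K_X+\Delta_m),
$$
using that $-K_X + \Delta_m$ is $\Q$-Cartier, so that $f^*(-(K_X - \Delta_m)) = f^*(-K_X + \Delta_m)$ genuinely (the pullback of a $\Q$-Cartier divisor is linear, by the Proposition). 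Comparing the two displays gives $K^{-\Delta_m}_{Y/X} = K^+_{m,Y/X}$ exactly, hence $\inf K^{-\Delta}_{Y/X} \leq K^+_{m,Y/X}$ for every $m$, and taking $\liminf_m$ gives $\inf K^{-\Delta}_{Y/X} \leq K^+_{Y/X}$.

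Combining the two inequalities yields the claimed equality. The one point requiring care — and the place I expect a referee to look hardest — is the bookkeeping around $f^{-1}_*(-\Delta_m)$ versus $-f^{-1}_*\Delta_m$ and around whether the computation of $K^{-\Delta_m}_{Y/X}$ from Remark \ref{rk:dFH 3.9}-style identities is legitimate when $f$ is merely a proper birational morphism rather than a log resolution of the pair $(X,-\Delta_m)$; but since $-K_X + \Delta_m = -(K_X - \Delta_m)$ is $\Q$-Cartier, $f^*(K_X - \Delta_m)$ is the honest pullback and the formula $K^{-\Delta_m}_{Y/X} = K_Y + f^{-1}_*(-\Delta_m) - f^*(K_X-\Delta_m)$ is just the definition, so no genuine difficulty arises. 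One should also note that $\Delta_m$ as produced by Lemma \ref{lm:compatible D-boundary} depends on $f$, which is harmless here since the inequality $\inf_\Delta K^{-\Delta}_{Y/X} \le K^{-\Delta_m}_{Y/X}$ only requires $\Delta_m$ to be \emph{some} compatible $(-K_X)$-boundary with respect to the fixed $f$.
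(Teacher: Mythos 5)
Your proof is correct and follows exactly the route the paper intends (the paper states this corollary without a written proof, merely noting that it follows from Lemma \ref{lm:K+ and boundaries}): you use Lemma \ref{lm:K+ and boundaries} for $K^+_{Y/X}\leq\inf K^{-\Delta}_{Y/X}$, and the identity $K^{-\Delta_m}_{Y/X}=K^+_{m,Y/X}$ for $m$-compatible $(-K_X)$-boundaries from Lemma \ref{lm:compatible D-boundary} together with $K^+_{Y/X}=\liminf_m K^+_{m,Y/X}$ for the reverse inequality. The details you supply, including the remarks on linearity of $f^*$ for the $\Q$-Cartier divisor $K_X-\Delta_m$ and the dependence of $\Delta_m$ on $f$, are sound.
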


\section{Discrepancies and singularities}
\begin{df} Let $Y\rightarrow X$ be a proper birational morphism with $Y$ normal, and let $F$ be a prime divisor on $Y$. For each integer $m\geq1$, the \emph{$m$-limiting discrepancy} of $F$ with respect to $X$ is
$$
a_m(F,X):=\ord_F(K^-_{m,Y/X}).
$$
The \emph{discrepancy} of $F$ with respect to $X$ is
$$
a(F,X):=\ord_F(K^-_{Y/X}).
$$
\end{df}
We recall that, from \cite{komo}, if $\Delta$ is a boundary for $K_X$, we have
$$
a(F,X,\Delta):=\ord_F(K^\Delta_{Y/X}).
$$
Lemma \ref{lm:compatible D-boundary} and \eqref{K-(Y/X)=limsupK-(m,Y/X)} can be rephrased as follows (see also \cite[2.12]{stefano}).
\begin{cor}\label{cor:supofdiscrepancies} Let $f:Y\rightarrow X$ be a proper birational morphism between normal varieties, and let $F$ be a divisor on $Y$. For any $m\geq2$ there is an $m$-compatible boundary such that
$$
a_m(F,X)=a(F,X,\Delta_m).
$$
In particular
\begin{eqnarray}
\label{eq:a=sup}a(F,X)&=&\sup\{a_m(F,X)\}=\\
&=&\sup\{a(F,X,\Delta)\,|\,\textrm{$(X,\Delta)$ is a log pair, $\Delta\geq0$}\}\nonumber
\end{eqnarray}
\end{cor}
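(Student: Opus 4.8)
The plan is to obtain both assertions from Lemma~\ref{lm:compatible D-boundary} applied with $D=K_X$, together with the monotonicity relations \eqref{eq:K-<=K^+} and \eqref{K-(Y/X)=limsupK-(m,Y/X)} and the inequalities recorded in Remark~\ref{rk:dFH 3.9}. Fix $m\geq 2$ and let $\Delta_m$ be a compatible $m$-boundary with respect to $f$, which exists by Lemma~\ref{lm:compatible D-boundary} in the case $D=K_X$. Since $K_X+\Delta_m$ is $\Q$-Cartier, $f^*(K_X+\Delta_m)$ is the ordinary pullback, so part~(d) of that lemma, $\tfrac1m f^\natural(mK_X)=f^*(K_X+\Delta_m)-f^{-1}_*\Delta_m$, together with the definitions gives
$$
K^-_{m,Y/X}=K_Y-\tfrac1m f^\natural(mK_X)=K_Y+f^{-1}_*\Delta_m-f^*(K_X+\Delta_m)=K^{\Delta_m}_{Y/X}.
$$
Applying $\ord_F$ yields $a_m(F,X)=a(F,X,\Delta_m)$, and since $\Delta_m\geq 0$ (with $\lfloor\Delta_m\rfloor=0$), $(X,\Delta_m)$ is a log pair with effective boundary; this proves the first assertion.

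For the chain \eqref{eq:a=sup}, observe that by \eqref{eq:K-<=K^+} the family $\{K^-_{m,Y/X}\}_{m\geq 1}$ is non-decreasing along the divisibility order and bounded above by $K^-_{Y/X}$, and by \eqref{K-(Y/X)=limsupK-(m,Y/X)} its coefficient-wise $\limsup$ equals $K^-_{Y/X}$. Reading off the coefficient along the fixed prime $F$ gives $a(F,X)=\ord_F(K^-_{Y/X})=\sup_m\ord_F(K^-_{m,Y/X})=\sup_m a_m(F,X)$, and by the first part this equals $\sup_m a(F,X,\Delta_m)\leq\sup\{a(F,X,\Delta)\mid(X,\Delta)\text{ a log pair},\ \Delta\geq 0\}$. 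Conversely, if $(X,\Delta)$ is any log pair with $\Delta\geq 0$, pick $m$ with $m(K_X+\Delta)$ Cartier; by Remark~\ref{rk:dFH 3.9}, $K^\Delta_{Y/X}\leq K^-_{m,Y/X}\leq K^-_{Y/X}$ on $Y$, hence $a(F,X,\Delta)=\ord_F(K^\Delta_{Y/X})\leq\ord_F(K^-_{Y/X})=a(F,X)$. Taking the supremum over all such $\Delta$ closes the loop and forces all the displayed quantities to coincide.

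The argument is essentially formal; the only point requiring a word of care is the passage from the coefficient-wise relations \eqref{eq:K-<=K^+}--\eqref{K-(Y/X)=limsupK-(m,Y/X)} to the numerical identity $a(F,X)=\sup_m a_m(F,X)$, where one uses that evaluating the coefficient along a fixed prime divisor commutes with the monotone limit; this is immediate once the divisibility-monotonicity in \eqref{eq:K-<=K^+} is invoked, so that the $\limsup$ of \eqref{K-(Y/X)=limsupK-(m,Y/X)} is an actual supremum. I do not anticipate any genuine obstacle beyond this.
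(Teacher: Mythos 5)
Your argument is correct and is exactly the one the paper intends: the identity $a_m(F,X)=a(F,X,\Delta_m)$ falls out of part~(d) of Lemma~\ref{lm:compatible D-boundary} with $D=K_X$, and the two suprema in \eqref{eq:a=sup} are handled by the monotonicity \eqref{eq:K-<=K^+}, the limit formula \eqref{K-(Y/X)=limsupK-(m,Y/X)}, and the inequality $K^\Delta_{Y/X}\leq K^-_{m,Y/X}$ from Remark~\ref{rk:dFH 3.9}. The paper gives no separate proof (it presents the corollary as a rephrasing of those facts), and your write-up supplies precisely the missing details.
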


Finally, we can introduce the definitions of singularities.  We can use limiting discrepancies to define singularities.
\begin{df}\label{df:sing} The variety $X$ is said to satsfy condition $\mathrm{M}_{\geq-1}$ (resp. $\mathrm{M}_{>-1}$, resp. $\mathrm{M}_{\geq0}$, resp. $\mathrm{M}_{>0}$) if there is an integer $m_0$ such that $a_m(F,X)\geq-1$ (resp. $>-1$, resp. $\geq0$, resp. $>0$) for every prime divisor $F$ over $X$ and $m=m_0$ (and hence for any positive multiple $m$ of $m_0$).
\end{df}
We recall that conditions $\mathrm{M}_{\geq-1}$ and $\mathrm{M}_{>-1}$ are used by \cite{dFH} to define singularites.
\begin{df}[\cite{dFH}, 7.1] The variety $X$ is called \emph{log canonical} (resp. \emph{log terminal}) if it satisfies condition $\mathrm{M}_{\geq-1}$ (resp., $\mathrm{M}_{>-1}$).
\end{df}
Some results that justify the introduction of these conditions are the following.
\begin{thm}\label{thm:dFH 7.2} A variety $X$ satisfies condition $\mathrm{M}_{\geq-1}$ (resp. $\mathrm{M}_{>-1}$, resp. $\mathrm{M}_{\geq0}$, resp. $\mathrm{M}_{>0}$) if and only if there is an effective boundary $\Delta$ such that $(X,\Delta)$ is log canonical (resp. Kawamata log terminal, resp. canonical, resp. terminal).
\end{thm}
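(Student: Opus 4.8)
The plan is to prove the two implications separately, running all four regularity conditions in parallel. \emph{The easy implication} is essentially immediate from Remark \ref{rk:dFH 3.9}. Suppose $\Delta\ge 0$ is a boundary with $(X,\Delta)$ log canonical (resp.\ klt, canonical, terminal), and choose $m_0\ge 1$ with $m_0(K_X+\Delta)$ Cartier. By the effective case of Remark \ref{rk:dFH 3.9}, for every proper birational $f\colon Y\to X$ one has $K^\Delta_{Y/X}\le K^-_{m_0,Y/X}$, hence $a(F,X,\Delta)\le a_{m_0}(F,X)$ for every prime divisor $F$ over $X$. Since passing from $K^\Delta$ to $K^-_{m_0}$ only improves discrepancies, the hypothesis on $(X,\Delta)$ at once forces $a_{m_0}(F,X)\ge -1$ (resp.\ $>-1$, $\ge 0$, $>0$) for the relevant $F$ (a non-exceptional $F$ has $a_{m_0}(F,X)=0$, by reflexivity of $\regsh_X(-m_0K_X)$), i.e.\ $X$ satisfies $\mathrm M_{\ge -1}$ (resp.\ $\mathrm M_{>-1}$, $\mathrm M_{\ge 0}$, $\mathrm M_{>0}$) with this same $m_0$.

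\emph{The main implication} is where the work lies. Assume $X$ satisfies $\mathrm M_{\ge -1}$ (resp.\ $\mathrm M_{>-1}$, $\mathrm M_{\ge 0}$, $\mathrm M_{>0}$), witnessed by $m_0$; we may take $m_0\ge 2$ (and, in the terminal case, $m_0\ge 3$, or else arrange the boundary below to be irreducible). The candidate is a compatible boundary: I would fix a log resolution $f\colon Y\to X$ high enough for the ``moreover'' clause of Lemma \ref{lm:compatible D-boundary} to apply, and take $\Delta:=\Delta_{m_0}$ an $m_0$-compatible $K_X$-boundary with respect to $f$, chosen generically so that $\exc(f)\cup f^{-1}_*\Delta$ is a simple normal crossing divisor. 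The decisive point — and this is exactly where the warning in Remark \ref{rk:boundary} must be defused — is that for such a high $f$ this simple normal crossing condition makes $f$ a genuine log resolution of the \emph{pair} $(X,\Delta)$, so the singularities of $(X,\Delta)$ can be read off on $Y$. Concretely, the pullback identity of Lemma \ref{lm:compatible D-boundary} (with $D=K_X$) gives $K^\Delta_{Y/X}=K^-_{m_0,Y/X}$; writing $K_Y+B_Y=f^*(K_X+\Delta)$ one gets $B_Y=f^{-1}_*\Delta-K^-_{m_0,Y/X}$, so $(Y,B_Y)$ is a simple normal crossing sub-pair whose coefficient is $\frac1{m_0}$ along each component of $f^{-1}_*\Delta$ and $-a_{m_0}(E,X)$ along each $f$-exceptional prime divisor $E$.

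From here each case reduces to a condition on $B_Y$ that $\mathrm M$ provides. Since $f$ log-resolves $(X,\Delta)$, the discrepancies of $(X,\Delta)$ over $X$ are exactly those of $(Y,B_Y)$ over $Y$ together with the negatives of the coefficients of $B_Y$. Thus $(X,\Delta)$ is log canonical (resp.\ klt) iff every coefficient of the simple normal crossing divisor $B_Y$ is $\le 1$ (resp.\ $<1$), which holds because $\frac1{m_0}<1$ and $-a_{m_0}(E,X)\le 1$ (resp.\ $<1$) for all $E$ by $\mathrm M_{\ge -1}$ (resp.\ $\mathrm M_{>-1}$), with $\lfloor\Delta\rfloor=0$ disposing of the remaining klt requirement. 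Similarly $(X,\Delta)$ is canonical (resp.\ terminal) iff the $f$-exceptional coefficients of $B_Y$ are $\le 0$ (resp.\ $<0$) — which is precisely $\mathrm M_{\ge 0}$ (resp.\ $\mathrm M_{>0}$) — and no divisor extracted over $Y$ acquires negative (resp.\ nonpositive) discrepancy; in these two cases all coefficients of $B_Y$ are then at most $\frac1{m_0}\le\frac12$, so a routine combinatorial computation on the simple normal crossing pair $(Y,B_Y)$ (see \cite{komo}) gives the claim, with $m_0\ge 3$ (or the irreducibility of $\Delta$) used in the terminal case to keep a codimension-two stratum of $f^{-1}_*\Delta$ from having its two coefficients sum to $1$. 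This exhibits $\Delta_{m_0}$ as the required effective boundary.

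\emph{The main obstacle} lies entirely in the main implication, in two layers. First, the compatible boundary $\Delta_{m_0}$ is manufactured from $f$ and, as Remark \ref{rk:boundary} stresses, need not in general be log-resolved by $f$ — so one must stay in the ``sufficiently high $f$'' regime of Lemma \ref{lm:compatible D-boundary}, where $\exc(f)\cup f^{-1}_*\Delta_{m_0}$ is simple normal crossing and hence $f$ \emph{is} a log resolution of $(X,\Delta_{m_0})$; this is what lets the regularity of the pair be tested on $Y$. Second, in the canonical and terminal cases one must still control divisors lying over $Y$ but not on $Y$, which is handled by the coefficient bound $\frac1{m_0}\le\frac12$ on $B_Y$.
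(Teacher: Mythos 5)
Your proof follows the paper's own argument almost verbatim: the easy direction via the inequality $K^\Delta_{Y/X}\le K^-_{m_0,Y/X}$ of Remark \ref{rk:dFH 3.9}, and the converse via an $m_0$-compatible boundary $\Delta_{m_0}$ for a sufficiently high log resolution together with the identity $a_{m_0}(F,X)=a(F,X,\Delta_{m_0})$. The only difference is that you spell out the simple normal crossing discrepancy computation (and the extra care needed in the canonical and terminal cases for divisors over $Y$ but not on $Y$) that the paper leaves implicit; this is a welcome elaboration rather than a deviation, and your worry about the terminal case is in fact already defused because the compatible boundary of Lemma \ref{lm:compatible D-boundary} can be taken irreducible by Bertini.
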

\begin{proof} For the $\mathrm{M}_{\geq-1}$ and $\mathrm{M}_{>-1}$ cases, this is \cite[7.2]{dFH}. The same proof given there proves the other cases. We repeat the proof for the convenience of the reader.

Let us first assume that there exists a boundary $\Delta$ on $X$ giving the singularity type. Let $m$ be an integer, such that $m(K_X+\Delta)$ is Cartier. By remark \ref{rk:dFH 3.9}, for any proper birational morphism $f:Y\rightarrow X$, with $Y$ normal, and any divisor $F$ on $Y$, $K^\Delta_{Y/X}\leq K^-_{m,Y/X}$. Hence $X$ will have singularities no worse than those given by the pair.

Conversely,  let $X$ satisfy condition $\mathrm{M}_{\geq-1}$ (resp. $\mathrm{M}_{>-1}$, resp. $\mathrm{M}_{\geq0}$, resp. $\mathrm{M}_{>0}$) and let $m_0$ be chosen as in definition \ref{df:sing}. Let $\Delta_{m_0}$ be an $m_0$-compatible boundary for a log resolution $f:Y_{m_0}\rightarrow X$ of $((X,\Delta_{m_0}),\regsh_X(-m_0K_X))$ (see lemma \ref{lm:compatible D-boundary}). For each exceptional divisor $F$ on $Y_{m_0}$, $a_{m_0}(F,X)=a(F,X,\Delta_{m_0})$, which proves that $(X,\Delta_{m_0})$ is log canonical (resp. log terminal, resp. canonical, resp. terminal).
\end{proof}
\begin{lm} Let $X$ be a normal $\Q$-Gorenstein variety. Then $X$ satisfies condition $\mathrm{M}_{\geq-1}$ (resp. $\mathrm{M}_{>-1}$, resp. $\mathrm{M}_{\geq0}$, resp. $\mathrm{M}_{>0}$) if and only if $X$ is log canonical (resp. Kawamata log terminal, resp. canonical, resp. terminal).
\end{lm}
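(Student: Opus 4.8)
The plan is to reduce the statement to the ordinary theory of discrepancies on $\Q$-Gorenstein varieties, so that it becomes the usual definitions of log canonical, Kawamata log terminal, canonical, and terminal read off verbatim. The first step -- and essentially the only one with content -- is this: since $X$ is $\Q$-Gorenstein there is a positive integer $r$ with $rK_X$ Cartier, hence $mK_X$ is Cartier for every positive multiple $m$ of $r$. For such $m$, the proposition quoted from \cite[2.4 and 2.10]{dFH} gives $\frac{1}{m}f^\natural(mK_X)=\frac{1}{m}f^*(mK_X)=f^*(K_X)$, the ordinary pullback of the $\Q$-Cartier divisor $K_X$. Therefore $K^-_{m,Y/X}=K_Y-f^*(K_X)=K^-_{Y/X}$ for every proper birational $f:Y\rightarrow X$ with $Y$ normal and every $m$ divisible by $r$, and consequently $a_m(F,X)=\ord_F(K_Y-f^*(K_X))=a(F,X)$ is the usual discrepancy of $F$, for every prime divisor $F$ over $X$.

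With this identity in hand I would prove both implications in one stroke, letting the inequality sign range over $\geq-1,\,>-1,\,\geq0,\,>0$ (equivalently lc, klt, canonical, terminal), and writing $\mathrm{M}_{\bullet}$ for the corresponding condition of Definition \ref{df:sing}. For the ``if'' direction: if $X$ has the given singularity type in the usual sense, i.e. $a(F,X)$ satisfies the corresponding inequality for all $F$, then taking $m_0=r$ in Definition \ref{df:sing} and invoking $a_m(F,X)=a(F,X)$ for all multiples $m$ of $m_0$ yields $\mathrm{M}_{\bullet}$. For the ``only if'' direction: given a witness $m_0$ for $\mathrm{M}_{\bullet}$, replace it by $r\,m_0$, which is still a witness (being a multiple of $m_0$) and is divisible by $r$; then $a_{r m_0}(F,X)=a(F,X)$ by the first paragraph, and the defining inequality of $\mathrm{M}_{\bullet}$ is exactly the usual lc/klt/canonical/terminal inequality for $X$.

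I do not anticipate any genuine obstacle: everything hinges on the equality $K^-_{m,Y/X}=K^-_{Y/X}$ for $m$ divisible by the Gorenstein index, which is immediate from \cite[2.4 and 2.10]{dFH}. An alternative, slightly shorter route is to quote Theorem \ref{thm:dFH 7.2}: an $\mathrm{M}_{\bullet}$ variety carries an effective boundary $\Delta$ with $(X,\Delta)$ of the corresponding type, and since $\Delta\geq0$ and $K_X$ is $\Q$-Cartier one has $a(F,X)\geq a(F,X,\Delta)$ for all $F$, so $X$ itself has the stated singularities; conversely one applies Theorem \ref{thm:dFH 7.2} with $\Delta=0$. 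The only delicate point worth a phrase is the convention for ``prime divisor over $X$'' in the canonical/terminal cases -- it must be the one under which divisors on $X$ itself carry discrepancy $0$ -- so that $\mathrm{M}_{\geq0}$ and $\mathrm{M}_{>0}$ line up exactly with the classical definitions; this convention is applied identically on both sides of the equivalence, so it causes no trouble.
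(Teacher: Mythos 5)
Your argument is correct: for $m$ divisible by the index $r$ the divisor $mK_X$ is Cartier, so $\frac{1}{m}f^\natural(mK_X)=f^*(K_X)$ by the quoted proposition, hence $a_m(F,X)$ is the classical discrepancy, and the two directions follow exactly as you describe (using that a multiple of a witness $m_0$ is again a witness). The paper states this lemma without proof, treating it as immediate from the same observation, so your write-up simply supplies the omitted details; your closing remark about the exceptional-divisor convention in the terminal case is the right caveat to flag.
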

\begin{lm} Let $X$ be a normal variety. If it satisfies $\mathrm{M}_{>0}$, then it satisfies $\mathrm{M}_{\geq0}$. If it satisfies $\mathrm{M}_{\geq0}$, then it satisfies $\mathrm{M}_{>-1}$. If it satisfies $\mathrm{M}_{>-1}$, then it satisfies $\mathrm{M}_{\geq-1}$.
\end{lm}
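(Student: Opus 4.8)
The statement is a chain of three implications, all of which should be routine given the machinery already in place, most notably the inequalities \eqref{eq:K-<=K^+} and the characterization \eqref{K-(Y/X)=limsupK-(m,Y/X)}. My plan is to unwind each condition to a statement about the $m$-limiting discrepancies $a_m(F,X) = \ord_F(K^-_{m,Y/X})$ and play off the ordering of the divisors $K^-_{m,Y/X}$ for varying $m$ against the definition of condition $\mathrm{M}_\bullet$, which only requires the relevant bound to hold for \emph{one} suitable $m_0$ (and hence all its multiples).

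First I would handle the two elementary steps, $\mathrm{M}_{>0}\Rightarrow\mathrm{M}_{\geq0}$ and $\mathrm{M}_{>-1}\Rightarrow\mathrm{M}_{\geq-1}$: these are immediate since a strict inequality $a_m(F,X)>0$ (resp. $>-1$) for all $F$ and some fixed $m_0$ forces the corresponding non-strict inequality for the same $m_0$, so the same $m_0$ witnesses the weaker condition. The only implication with content is $\mathrm{M}_{\geq0}\Rightarrow\mathrm{M}_{>-1}$. Here I would argue as follows. Suppose $X$ satisfies $\mathrm{M}_{\geq0}$, witnessed by $m_0$, so $a_{m_0}(F,X)\geq 0$ for every prime divisor $F$ over $X$. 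I want to produce (possibly a different) $m_1$ with $a_{m_1}(F,X)>-1$ for all $F$. The natural candidate is $m_1 = 2m_0$ (or any fixed multiple): by \eqref{eq:K-<=K^+} we have $K^-_{m_0,Y/X}\leq K^-_{2m_0,Y/X}$, hence $a_{2m_0}(F,X)\geq a_{m_0}(F,X)\geq 0 > -1$ for every $F$ on every $Y\to X$. Thus $m_1=2m_0$ (indeed already $m_0$ itself) witnesses $\mathrm{M}_{>-1}$. In fact this shows the even cleaner fact that $a_{m_0}(F,X)\geq 0$ already implies $a_{m_0}(F,X)>-1$, so no change of level is needed; I would phrase it that way to keep the proof to one line per implication.

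The expected main (and only slight) obstacle is making sure the quantifier structure of Definition \ref{df:sing} is used correctly: condition $\mathrm{M}_\bullet$ asserts existence of \emph{some} $m_0$ with the bound holding at $m_0$ and all its multiples, not the bound holding for all large $m$. Since all three implications above preserve the bound at the \emph{same} $m_0$ (the first and third by $\epsilon$-softening a strict inequality, the middle one because $\geq 0$ trivially implies $>-1$ on the nose), there is no issue; one simply reuses the witness $m_0$. I would write the proof as three short sentences, one per implication, each citing Definition \ref{df:sing} for the quantifier and — in the middle case — the trivial numerical fact $0>-1$ together with the definition $a_m(F,X)=\ord_F(K^-_{m,Y/X})$.
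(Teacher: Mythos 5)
Your argument is correct: each implication is witnessed by the same $m_0$, since $a_{m_0}(F,X)>0\Rightarrow a_{m_0}(F,X)\geq0$, $\geq0\Rightarrow>-1$, and $>-1\Rightarrow\geq-1$ are trivial numerical implications, and the quantifier structure of the definition is preserved. The paper omits the proof entirely, treating the lemma as immediate, and your reasoning (including your correct observation that the detour through $m_1=2m_0$ and the inequality $K^-_{m_0,Y/X}\leq K^-_{2m_0,Y/X}$ is unnecessary) is exactly the intended one.
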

Essentially, we have the chain of implications
$$
\mathrm{M}_{>0}\Rightarrow\mathrm{M}_{\geq0}\Rightarrow\mathrm{M}_{>-1}\Rightarrow\mathrm{M}_{\geq-1}.
$$
We conclude this section by showing that the notion of discrepancy can work for the non-$\Q$-Gorenstein case exactly as it works in the $\Q$-Gorenstein case.
\begin{df} Let $X$ be a normal variety. The \emph{discrepancy} of $X$ is given by
$$
\discrep(X)=\inf_E\{\textrm{$a(E,X)$, $E$ is an exceptional divisor over $X$}\}
$$
(where $E$ runs through all the irreducible exceptional divisors of all (proper) birational morphisms $Y\rightarrow X$).
\end{df}
The standard theory of discrepancies can be extended to this setting, as done by \cite{dFH}. One result that is different in this case is the following (see \cite[2.30]{komo}).
\begin{lm}\label{lm:komo 2.30} Let $X$ be a normal variety, $f:Y\rightarrow X$ be a resolution of singularities and let $\Delta_Y$ be the $\Q$-divisor on $Y$ such that
$$
K_Y+\Delta_Y = f^*(K_X),\quad f_*\Delta_Y=0.
$$
For any prime divisor $F$ over $X$,
$$
a(F,X)\leq a(F,Y,\Delta_Y),
$$
with equality if 
\begin{enumerate}
\item $F$ is a divisor on $Y$,
\item $K_X$ is $\Q$-Cartier, or
\item $\regsh_X(-m(K_X))\cdot\regsh_Y$ is invertible for all sufficiently divisible $m$.
\end{enumerate}
\end{lm}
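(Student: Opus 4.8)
The plan is to mimic the proof of the $\Q$-Gorenstein statement \cite[2.30]{komo}, the only change being that the functoriality of the ordinary pullback is replaced by Corollary \ref{cor:(fg)*-g*f*}. Fix a prime divisor $F$ over $X$. If $F$ is already a divisor on $Y$ then there is nothing to do: taking $Y$ itself as the model, the defining relation $K_Y+\Delta_Y=f^*K_X$ gives $a(F,X)=\ord_F(K_Y-f^*K_X)=\ord_F(-\Delta_Y)$, and $\ord_F(-\Delta_Y)$ is, by the standard convention for a prime divisor lying on the base of a pair, equal to $a(F,Y,\Delta_Y)$; so equality holds and (a) is settled. From now on assume $F$ is exceptional over $Y$, and pick a proper birational morphism $g\colon V\to Y$ from a normal variety $V$ on which $F$ appears as a ($g$-exceptional) prime divisor; set $h:=f\circ g\colon V\to X$. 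By the model independence of the de Fernex--Hacon discrepancies we may compute everything on $V$.

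Unwinding the definitions --- using $K_Y+\Delta_Y=f^*K_X$ and the vanishing $\ord_F(g_*^{-1}\Delta_Y)=0$, valid because $F$ is $g$-exceptional --- one obtains
\[
a(F,X)=\ord_F\bigl(K_V-h^*K_X\bigr),\qquad a(F,Y,\Delta_Y)=\ord_F\bigl(K_V-g^*(K_Y+\Delta_Y)\bigr)=\ord_F\bigl(K_V-g^*(f^*K_X)\bigr),
\]
where on $Y$ the pullback $g^*$ is taken in the de Fernex--Hacon sense. So the statement reduces to comparing $h^*K_X=(f\circ g)^*K_X$ with $g^*(f^*K_X)$ on $V$, and this is exactly what Corollary \ref{cor:(fg)*-g*f*} controls: the divisor $E:=(f\circ g)^*K_X-g^*(f^*K_X)$ is effective and $g$-exceptional. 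Therefore $(f\circ g)^*K_X\ge g^*(f^*K_X)$ coefficient-wise, hence $K_V-h^*K_X\le K_V-g^*(f^*K_X)$, and applying the monotone functional $\ord_F$ gives $a(F,X)\le a(F,Y,\Delta_Y)$; more precisely $a(F,Y,\Delta_Y)-a(F,X)=\ord_F(E)\ge0$.

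For the equality in cases (b) and (c) it is enough to force $\ord_F(E)=0$, and in fact in both cases $E=0$, by the last clause of Corollary \ref{cor:(fg)*-g*f*}. If $K_X$ is $\Q$-Cartier, choose $t\in\Z_{>0}$ with $tK_X$ Cartier; then $\regsh_X(-mK_X)$ is invertible whenever $t\mid m$, hence so is $\regsh_X(-mK_X)\cdot\regsh_Y$, which is precisely the hypothesis guaranteeing $(f\circ g)^*K_X=g^*(f^*K_X)$. Case (c) is that hypothesis applied verbatim to $D=K_X$. In either case $E=0$ and $a(F,X)=a(F,Y,\Delta_Y)$.

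The step needing the most care is the first one: pinning down what $a(F,Y,\Delta_Y)$ means when $K_Y+\Delta_Y$ fails to be $\Q$-Cartier (so the discrepancy of the pair $(Y,\Delta_Y)$ must be read through the limiting construction on $Y$), and verifying that, so interpreted, the two displayed identities --- and the model independence that lets us transport them to $V$ --- really hold. Once those formulas are in place, the comparison is an immediate consequence of the effectivity assertion in Corollary \ref{cor:(fg)*-g*f*}.
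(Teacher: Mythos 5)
Your proof is correct and follows essentially the same route as the paper's: both decompose $(f\circ g)^*K_X=g^*f^*K_X+B$ with $B$ effective and $g$-exceptional via Corollary \ref{cor:(fg)*-g*f*}, read off the inequality from the effectivity of $B$, and obtain equality in cases (b) and (c) from the invertibility clause of that corollary (with (a) following because $B$ is $g$-exceptional). The only caveat you flag --- what $g^*(K_Y+\Delta_Y)$ means if $K_Y+\Delta_Y$ fails to be $\Q$-Cartier --- is moot, since $Y$ is smooth and $\Delta_Y$ is a $\Q$-divisor, so $K_Y+\Delta_Y$ is automatically $\Q$-Cartier, exactly as the paper notes.
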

The above lemma can be formulated more generally in the sense of \cite{dFH}, for pairs $(X,Z)$ and assuming that $f:Y\rightarrow X$ is just a proper birational morphism of normal varieties. However, this is beyond the purpose of this paper.
\begin{proof} The proof proceeds as in the log $\Q$-Gorenstein case. Let $g:V\rightarrow Y$ be a resolution with $F$ a divisor on $V$, and let $h=f \circ g$. We have the diagram
$$
\xymatrix{ V \ar[d]_g \ar[dr]^h & \\
Y \ar[r]_f & X.}
$$
By corollary \ref{cor:(fg)*-g*f*},
$$
(f \circ g)^*(K_X)=g^*f^*(K_X)+B=g^*(K_Y+\Delta_Y)+B,
$$
where $B$ is effective and $g$-exceptional. Therefore, if $A'$ is the discrepancy of $h$ with respect to $X$ and $A$ is the discrepancy of $g$ with respect to $(Y,\Delta_Y)$, then
$$
A'=K_V-(f \circ g)^*(K_X)=K_V-g^*(K_Y+\Delta_Y)-B=A-B,
$$
that is $A=A'+B$, with $B$ effective and $g$-exceptional. Since the same computation holds for any resolution $g:V\rightarrow Y$, this proves the inequality in general. Since $B$ is $g$-exceptional, (a) is immediate. Case (b) is a particular case of (c), and (c) follows again from corollary \ref{cor:(fg)*-g*f*} ($K_Y+\Delta_Y$ is $\Q$-Cartier, since $Y$ is smooth).
\end{proof}
\begin{rk} In the $\Q$-Gorenstein case, we have the following (\cite[2.30]{komo}). If $\Delta_X$ is a boundary on $X$, $f:Y\rightarrow X$ is a proper birational morphism of normal varieties, and $\Delta_Y$ is a divisor on $Y$ such that $K_Y+\Delta_Y\equiv f^*(K_X+\Delta_X)$ and $f_*\Delta_Y=\Delta_X$, then $\discrep(X,\Delta_X)=\totaldiscrep(Y,\Delta_Y)$, and this is the standard way of computing discrepancies. In our case, if $(Y,\Delta_Y)\rightarrow X$ are as in lemma \ref{lm:komo 2.30}, we only have
$$
\discrep(X)\leq\totaldiscrep(Y,\Delta_Y).
$$
\end{rk}
As in the $\Q$-Gorenstein case, we have the following result (see \cite[2.31(a)]{komo}).
\begin{cor}\label{cor:-1<=discrep<=1} Let $X$ be a normal variety. Then $\discrep(X)=-\infty$ or $-1\leq\discrep(X)\leq1$.
\end{cor}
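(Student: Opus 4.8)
The plan is to mimic the classical proof of the analogous statement in the $\Q$-Gorenstein case (\cite[2.31(a)]{komo}), using the new discrepancy theory developed above. The key point is that if $\discrep(X)<-1$, then we can bootstrap a single very-negative discrepancy into discrepancies tending to $-\infty$ by blowing up smooth centers inside the exceptional locus.

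First I would reduce to a convenient ambient space. By Lemma \ref{lm:komo 2.30}, if $f\colon Y\to X$ is a resolution and $\Delta_Y$ is defined by $K_Y+\Delta_Y=f^*(K_X)$, $f_*\Delta_Y=0$, then for every prime divisor $F$ over $X$ we have $a(F,X)\le a(F,Y,\Delta_Y)$, with equality when $F$ is a divisor on $Y$. Hence $\discrep(X)\le\totaldiscrep(Y,\Delta_Y)$, and conversely every exceptional divisor of $Y\to X$ realizes its discrepancy over $(Y,\Delta_Y)$ as well. So it suffices to work on the smooth variety $Y$ with the boundary $\Delta_Y$ (which may have components of coefficient $\ge 1$, but $Y$ is smooth and $K_Y+\Delta_Y$ is $\Q$-Cartier, so the ordinary log discrepancy calculus applies). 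Concretely: if $\discrep(X)\ge -1$ there is nothing to prove beyond the upper bound; so assume $a(E,X)<-1$ for some exceptional $E$ over $X$. Passing to a high enough resolution $Y$, we may assume $E$ is a divisor on $Y$, so $a(E,Y,\Delta_Y)=a(E,X)<-1$, i.e. the coefficient of $E$ in $\Delta_Y$ exceeds $1$ (after also making $E$ a component of $\Delta_Y$'s support, or noting $E$ appears with multiplicity $>1$ in the effective divisor $\Delta_Y + E - K_Y - f^* K_X$ — more cleanly: $\ord_E(\Delta_Y)>1$).

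Next comes the bootstrapping step, which is the heart of the argument and the one I expect to be the main obstacle to state cleanly. Take a point $p$ in the generic locus of $E$ on $Y$, and let $C$ be a smooth curve (or an appropriate smooth subvariety) through $p$ meeting $E$ transversally and meeting no other component of $\mathrm{Supp}(\Delta_Y)$; blow up $C$ (and iterate, blowing up the strict transform of $C$ inside the successive exceptional divisors). A direct local computation on the smooth ambient space shows that the exceptional divisors $E_1, E_2, \dots$ produced this way satisfy $a(E_k, Y, \Delta_Y) = k\bigl(a(E,Y,\Delta_Y)+1\bigr) - 1 + (\text{bounded terms from }K)$; since $a(E,Y,\Delta_Y)+1<0$, these go to $-\infty$. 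Because each $E_k$ is an exceptional divisor over $Y$, hence over $X$, and its discrepancy over $X$ equals its discrepancy over $(Y,\Delta_Y)$ once we pass to a resolution on which $E_k$ is a divisor (Lemma \ref{lm:komo 2.30}(a)), we conclude $\discrep(X)=-\infty$. The upper bound $\discrep(X)\le 1$ is the easy half: blow up a smooth point of $Y$ (lying off $\mathrm{Supp}(\Delta_Y)$ — possible since $Y$ is a variety) to get an exceptional divisor with discrepancy $\dim X - 1 \ge 1$ over $(Y,\Delta_Y)$, hence with $a(\cdot,X)$ equal to that value, forcing the infimum to be $\le 1$ (and if $\dim X = 1$ then $X$ is smooth and $\discrep(X)=+\infty$ by convention, so the statement is vacuous or handled separately).

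The one subtlety to handle carefully is that $\Delta_Y$ need not be effective and need not have snc support, so I cannot literally cite the $\Q$-Gorenstein discrepancy dichotomy as a black box on $(Y,\Delta_Y)$; instead I run the explicit blow-up computation, which only needs $Y$ smooth and $K_Y+\Delta_Y$ $\Q$-Cartier (both guaranteed), together with the transversality choices to control the contributions of the other components of $\Delta_Y$ along the chosen center. I would also double-check that choosing the center $C$ disjoint from $\mathrm{Supp}(\Delta_Y)\setminus E$ is possible: this holds because $E$ is a positive-dimensional (when $\dim X\ge 2$) prime divisor and its generic point avoids the other finitely many components, exactly as in \cite{komo}. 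With those local choices in place, the divergence $a(E_k,X)\to-\infty$ is immediate, completing the proof.
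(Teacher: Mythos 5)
Your main argument (the $-\infty$ dichotomy) follows essentially the same route as the paper: reduce to the pair $(Y,\Delta_Y)$ of Lemma \ref{lm:komo 2.30}, observe that a divisor $E$ on $Y$ with $a(E,X)<-1$ appears in $\Delta_Y$ with coefficient $-a(E,X)>1$, and then drive discrepancies to $-\infty$ by iterated blow-ups along centers generically contained in $E$. The paper simply quotes $\discrep(X)\le\totaldiscrep(Y,\Delta_Y)=-\infty$; you unwind that classical computation explicitly, which is fine (and your caution that $\Delta_Y$ need be neither effective nor snc is legitimate). One small inaccuracy: for the new divisors $E_k$ you invoke the \emph{equality} case (a) of Lemma \ref{lm:komo 2.30}, but $E_k$ is not a divisor on $Y$, so that case does not apply; what you actually need (and do have) is only the inequality $a(E_k,X)\le a(E_k,Y,\Delta_Y)$, which already forces $a(E_k,X)\to-\infty$.

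The genuine error is in the upper bound. Blowing up a smooth \emph{closed point} of $Y$ produces an exceptional divisor with discrepancy $\dim X-1$, and exhibiting a divisor with discrepancy $\dim X-1$ only shows $\discrep(X)\le\dim X-1$, which is weaker than $\discrep(X)\le1$ as soon as $\dim X\ge3$. To get the bound $1$ you must blow up a smooth center of \emph{codimension two} (this is what the paper means by ``a point of codimension $2$ contained in the smooth locus''), chosen to avoid $\mathrm{Supp}(\Delta_Y)$: the resulting exceptional divisor has discrepancy $2-1=1$ over $(Y,\Delta_Y)$, hence $a(\cdot,X)\le1$ by Lemma \ref{lm:komo 2.30}, giving $\discrep(X)\le1$. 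As written, your upper-bound step fails in dimension at least $3$.
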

\begin{proof}Blowing up a point of codimension $2$ contained in the smooth locus we have $\discrep(X)\leq1$. If $\discrep(X)<-1$, let $E$ be on a resolution $f:Y\rightarrow X$ and such that $a(E,X)<-1$. Let $\Delta_Y$ be as in lemma \ref{lm:komo 2.30}. Then $E$ appears in $\Delta_Y$ with coefficient $-a(E,X)$, and $\discrep(X)\leq\totaldiscrep(Y,\Delta_Y)=-\infty$.
\end{proof}
This implies that the case $\discrep(X)=-1$ is the broadest class where it makes sense to define discrepancies. We have the following immediate consequences.
\begin{lm} Let $X$ be a normal $\Q$-Gorenstein variety. Then $\discrep(X)\geq-1$  (resp. $>-1$, resp. $\geq0$, resp. $>0$) if and only if $X$ is log canonical (resp. Kawamata log terminal, resp. canonical, resp. terminal).
\end{lm}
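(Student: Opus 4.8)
The plan is to observe that, for a $\Q$-Gorenstein variety, all the limiting constructions of Sections~2--3 degenerate to the classical ones, so that the statement reduces to a tautology against the definitions of \cite{komo}. Since $K_X$ is $\Q$-Cartier, fix $m_0\geq 1$ with $m_0K_X$ Cartier. By \cite[2.4 and 2.10]{dFH}, for every proper birational morphism $f\colon Y\to X$ with $Y$ normal and every $m$ divisible by $m_0$ one has $f^\natural(mK_X)=f^*(mK_X)=m\,f^*(K_X)$, where $f^*(K_X)$ denotes the ordinary pullback of the $\Q$-Cartier divisor $K_X$; hence
\[
K^-_{m,Y/X}=K_Y-\tfrac1m f^\natural(mK_X)=K_Y-f^*(K_X)=K^-_{Y/X}
\]
for all such $m$. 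Therefore $a_m(F,X)=a(F,X)=\ord_F\!\bigl(K_Y-f^*(K_X)\bigr)$ for every prime divisor $F$ over $X$ and every $m$ divisible by $m_0$; in particular $a(F,X)$ is the usual discrepancy of $F$ over the $\Q$-Gorenstein variety $X$, and $\discrep(X)$ is its usual discrepancy.

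Granting this, the equivalences are immediate. By the definitions recalled from \cite{komo} (applied with empty boundary on the $\Q$-Gorenstein variety $X$), ``$X$ is log canonical, Kawamata log terminal, canonical, or terminal'' means precisely ``$\discrep(X)\geq-1$, $>-1$, $\geq0$, or $>0$'' respectively, and by the previous paragraph these are exactly the stated inequalities for the discrepancy function of this section. One can also route the argument through Definition~\ref{df:sing}: the identity $a_m(F,X)=a(F,X)$ shows that $X$ satisfies $\mathrm{M}_{\geq-1}$ (resp. $\mathrm{M}_{>-1}$, $\mathrm{M}_{\geq0}$, $\mathrm{M}_{>0}$) if and only if $a(F,X)\geq-1$ (resp. $>-1$, $\geq0$, $>0$) for every prime divisor $F$ over $X$; a prime divisor $F$ on $X$ itself has $a(F,X)=0$ (because $f_*K_Y=K_X$), so this amounts to the corresponding inequality for $\discrep(X)=\inf_E a(E,X)$, and one concludes by Theorem~\ref{thm:dFH 7.2} or by the earlier lemma on $\Q$-Gorenstein varieties.

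There is essentially no obstacle: the content of the proof is the reduction in the first paragraph, after which everything is bookkeeping against the standard definitions. The one point deserving a word of care arises only if one argues via $\mathrm{M}_{>-1}$ and $\mathrm{M}_{>0}$, where ``$a(E,X)>c$ for every exceptional $E$'' must be upgraded to ``$\discrep(X)=\inf_E a(E,X)>c$'' for $c\in\{-1,0\}$. Here one uses Corollary~\ref{cor:-1<=discrep<=1} (so that $\discrep(X)\neq-\infty$, hence lies in $[-1,1]$) together with Lemma~\ref{lm:komo 2.30}(b): on a log resolution $f\colon Y\to X$ with $K_Y+\Delta_Y=f^*(K_X)$, $\Delta_Y=\sum b_iE_i$, $f_*\Delta_Y=0$, the discrepancies of $X$ agree with those of the log smooth pair $(Y,\Delta_Y)$, and the usual combinatorial estimate (blowing up a codimension-$k$ stratum produces a divisor of discrepancy $(k-1)-\sum b_{i_j}$, iterated) keeps all discrepancies strictly above $c$ as soon as $a(E_i,X)=-b_i>c$ for all $i$---ruling out $\discrep(X)=c$. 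Taking the direct route through the discrepancy function avoids even this.
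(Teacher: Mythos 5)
Your proposal is correct and follows what is essentially the only route here, which is also the one the paper intends: the lemma is stated without proof as an ``immediate consequence,'' the implicit content being exactly your first paragraph, namely that for $m_0K_X$ Cartier and $m_0\mid m$ one has $f^\natural(mK_X)=mf^*(K_X)$, so $K^-_{m,Y/X}=K^-_{Y/X}$ is the classical relative canonical divisor and $\discrep(X)$ is the classical discrepancy. Your extra care about upgrading the pointwise strict inequalities to a strict inequality for the infimum (via Corollary \ref{cor:-1<=discrep<=1} and the finiteness of the minimum on a log resolution, as in \cite[2.31]{komo}) is a legitimate point that the paper glosses over, and it does no harm to record it.
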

\begin{rk} Since $K^-_{m,Y/X}\leq K^-_{Y/X}$, see remark \ref{rk:dFH 3.9}, if $X$ is a normal variety satisfying $\mathrm{M}_{\geq-1}$ (log canonical), then $\discrep(X)\geq-1$. Similarly if $X$ satisfies $\mathrm{M}_{>-1}$ (log terminal), then $\discrep(X)>-1$; if it satisfies $\mathrm{M}_{\geq0}$, then $\discrep(X)\geq0$; if it satisfies $\mathrm{M}_{>0}$, then $\discrep(X)>0$. 
\end{rk}

\section{Lt${}^+$ singularities}
\subsection{Definition, and a criterion}
In the previous section, we used the relative canocial divisors $K^-_{m,Y/X}$ and $K^-_{Y/X}$ to define singularities. Another possibility is to use $K^+_{Y/X}$. Following \cite[8.1]{dFH}, but with a different notation we introduce the following.
\begin{df} Let $Y\rightarrow X$ be a proper birational morphism with $Y$ normal, and let $F$ be a prime divisor on $Y$. The \emph{discrepancy${}^+$} of $F$ with respect to $X$ is
$$
a^+(F,X):=\ord_F(K^+_{Y/X}).
$$
\end{df}
We recall that in \cite{dFH}, a normal variety $X$ is defined \emph{canonical} (resp. \emph{terminal}), if $a^+(F,X)\geq0$ (resp. $a^+(F,X)>0$) for all prime divisors $F$, exceptional over $X$.
\begin{df} Let $X$ be a normal variety. we say that $X$ has \emph{log terminal${}^+$}, or simply, \emph{lt${}^+$}, singularities if $a^+(F,X)>-1$ for all prime divisors $F$, exceptional over $X$.
\end{df}
\begin{lm}\label{lm:klt=>lt+} Let $X$ be a normal variety. If $X$ has (Kawamata) log terminal singularities, then it is lt${}^+$. The converse is true if $X$ is $\Q$-Gorenstein. 
\end{lm}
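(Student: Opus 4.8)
The plan is to deduce both implications directly from the chain of inequalities \eqref{eq:K-<=K^+}, which already compares $K^-_{m,Y/X}$, $K^-_{Y/X}$ and $K^+_{Y/X}$ on every birational model, together with the fact that the $\natural$-pullback is linear on the $\Q$-Cartier locus.

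For the first implication I would start from the definition: $X$ being (Kawamata) log terminal means $X$ satisfies condition $\mathrm{M}_{>-1}$, so there is an integer $m_0$ as in Definition~\ref{df:sing} with $a_{m_0}(F,X)>-1$ for every prime divisor $F$ over $X$. Now fix an arbitrary proper birational morphism $f\colon Y\to X$ with $Y$ normal. By \eqref{eq:K-<=K^+} we have $K^-_{m_0,Y/X}\le K^+_{Y/X}$ coefficient-wise, so reading off the coefficient along any prime divisor $F$ on $Y$,
$$
a^+(F,X)=\ord_F(K^+_{Y/X})\ge\ord_F(K^-_{m_0,Y/X})=a_{m_0}(F,X)>-1 .
$$
As $Y$ and $F$ are arbitrary, $a^+(F,X)>-1$ for every prime divisor $F$ exceptional over $X$, i.e. $X$ is lt${}^+$. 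One could route this through Theorem~\ref{thm:dFH 7.2} instead, but only the single inequality $K^-_{m_0,Y/X}\le K^+_{Y/X}$ is actually needed.

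For the converse I would use linearity of the pullback for $\Q$-Cartier divisors. Assume $X$ is $\Q$-Gorenstein, so $K_X$ — hence $-K_X$ — is $\Q$-Cartier, and choose $m_0$ with $m_0K_X$ Cartier. By the Proposition recalled above (\cite[2.4 and 2.10]{dFH}) one has $\tfrac1{m_0}f^\natural(m_0K_X)=f^*(K_X)$ and $\tfrac1{m_0}f^\natural(-m_0K_X)=f^*(-K_X)=-f^*(K_X)$, whence
$$
K^-_{m_0,Y/X}=K_Y-f^*(K_X)=K^+_{m_0,Y/X},
$$
and by \eqref{eq:K-<=K^+} all of $K^-_{m_0,Y/X}$, $K^-_{Y/X}$ and $K^+_{Y/X}$ coincide. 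Therefore $a^+(F,X)=a(F,X)=a_{m_0}(F,X)$ for every prime divisor $F$ over $X$, so the statement ``$X$ is lt${}^+$'' becomes literally condition $\mathrm{M}_{>-1}$, which for a $\Q$-Gorenstein variety is the same as $X$ being klt.

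I do not expect a genuine obstacle here: the lemma is essentially a bookkeeping consequence of \eqref{eq:K-<=K^+} and of the linearity of $f^*$ on $\Q$-Cartier divisors. The only point deserving care is the exact role of the $\Q$-Gorenstein hypothesis in the converse — away from the $\Q$-Cartier locus $f^*(-K_X)\ne -f^*(K_X)$ in general, so $K^+_{Y/X}$ may be strictly larger than $K^-_{Y/X}$ and the two sides of the equivalence come apart; the canonical-but-not-log-terminal example of \cite[4.1]{stefano} shows that this failure of the converse is a feature of the geometry rather than a defect of the argument.
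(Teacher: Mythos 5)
Your argument is correct; the paper states this lemma without any proof, and your two steps --- the inequality $K^-_{m_0,Y/X}\le K^+_{Y/X}$ from \eqref{eq:K-<=K^+} for the forward direction, and the collapse of the whole chain \eqref{eq:K-<=K^+} to a single divisor when $m_0K_X$ is Cartier (via linearity of $f^*$ on Cartier divisors) for the converse --- are exactly the intended bookkeeping. No gaps.
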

For a generic normal varity $X$, asking that $a^+(F,X)>-1$ for all $F$, or asking that $\inf\{a^+(F,X)\}>-1$ is the same, as the next lemma shows.
\begin{lm}\label{lm:ltwithoneresolution} Let $X$ be a normal variety, and let $f:Y\rightarrow X$ be a log resolution, i.e. the exceptional locus of $f$ is a simple normal crossing divisor. If $a^+(E,X)>-1+\e$ for all prime exceptional divisors $E$ on $Y$, for some $\e>0$, then $a^+(F,X)>-1+\e$ for all prime divisors $F$ exceptional over $X$. In particular, then, $X$ is lt${}^+$.
\end{lm}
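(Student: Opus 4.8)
The plan is to reduce to a single comparison of $K^+_{Y/X}$ on two different models, using the key fact from Lemma~\ref{lm:(fg)*-g*f*}/Corollary~\ref{cor:(fg)*-g*f*} that the pullback behaves well under composition up to an effective, $g$-exceptional error term. Let $F$ be a prime divisor exceptional over $X$; choose a resolution $g: V \to Y$ on which $F$ appears as a divisor, and set $h = f \circ g$. I want to bound $\ord_F(K^+_{V/X})$ from below by $-1 + \e$, knowing the analogous bound for the prime exceptional divisors of $f$ on $Y$.

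First I would compute $K^+_{V/X}$ in terms of data on $Y$. By definition $K^+_{V/X} = K_V + h^*(-K_X)$, and by Corollary~\ref{cor:(fg)*-g*f*} we have $h^*(-K_X) = g^*f^*(-K_X) + B$ with $B \ge 0$ and $g$-exceptional. Hence
\begin{eqnarray*}
K^+_{V/X} &=& K_V + g^*f^*(-K_X) + B = K_V + g^*\bigl(K^+_{Y/X} - K_Y\bigr) + B\\
&=& \bigl(K_V - g^*K_Y\bigr) + g^*\bigl(K^+_{Y/X}\bigr) + B = K_{V/Y} + g^*\bigl(K^+_{Y/X}\bigr) + B,
\end{eqnarray*}
where $K_{V/Y} = K_V - g^*K_Y$ is the usual (effective, since $Y$ is smooth) relative canonical divisor of the resolution $g$. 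So $\ord_F(K^+_{V/X}) = \ord_F(K_{V/Y}) + \ord_F\bigl(g^*K^+_{Y/X}\bigr) + \ord_F(B) \ge \ord_F(K_{V/Y}) + \ord_F\bigl(g^*K^+_{Y/X}\bigr)$, using $B \ge 0$.

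Next I would estimate $\ord_F\bigl(g^*K^+_{Y/X}\bigr)$ from below. Write $K^+_{Y/X} = \sum_E a^+(E,X) E$ where $E$ runs over prime divisors on $Y$; for the components $E$ that are $f$-exceptional we have $a^+(E,X) > -1 + \e$ by hypothesis, and for the (finitely many) non-$f$-exceptional components — i.e. strict transforms of divisors on $X$ — the coefficient is $\ge 0$ (indeed these are handled exactly as the corresponding coefficients of $K_Y - f^*(K_X)$; since $f_* K_Y = K_X$ the strict-transform part contributes non-negatively, cf. the proof of Lemma~\ref{lm:K+ and boundaries}). Thus $K^+_{Y/X} \ge (-1+\e)\,\Gamma$ where $\Gamma = \sum_{E \ \mathrm{exc}} E$ is the reduced $f$-exceptional divisor on $Y$, which has simple normal crossing support because $f$ is a log resolution. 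The standard toroidal/SNC computation (the same one used in the $\Q$-Gorenstein case, e.g. \cite{komo}) then gives, for any prime $F$ over $Y$,
$$
\ord_F\bigl(K_{V/Y} + g^*((-1+\e)\Gamma)\bigr) = a\bigl(F; Y, (1-\e)\Gamma\bigr) > -1 + \e,
$$
since $(Y,(1-\e)\Gamma)$ is a klt pair (coefficients of the SNC boundary lie in $[0,1)$). Combining, $\ord_F(K^+_{V/X}) \ge \ord_F(K_{V/Y}) + \ord_F\bigl(g^*K^+_{Y/X}\bigr) \ge \ord_F\bigl(K_{V/Y} + g^*((-1+\e)\Gamma)\bigr) > -1+\e$. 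Finally, since any prime divisor $F$ exceptional over $X$ appears on some such $V$, and $a^+(F,X) = \ord_F(K^+_{V/X})$ is independent of the chosen model (it is the coefficient in $K^+_{V/X}$, compatible under further blow-ups by the same argument with $B \ge 0$), we conclude $a^+(F,X) > -1 + \e$ for all such $F$, hence $X$ is lt${}^+$.

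The main obstacle I anticipate is twofold: first, justifying cleanly that the non-exceptional (strict-transform) coefficients of $K^+_{Y/X}$ are non-negative, so that one really can bound $K^+_{Y/X}$ below by a multiple of the \emph{reduced exceptional} divisor alone — this needs the normalization convention $f_*K_Y = K_X$ together with the inequality $f^*(-\Delta) \le f^{-1}_*(-\Delta)$ type reasoning from Lemma~\ref{lm:K+ and boundaries}; and second, invoking the SNC discrepancy computation for the pair $(Y,(1-\e)\Gamma)$ in a way that is valid for \emph{all} divisors $F$ over $Y$ (not just divisors on a fixed blow-up), which is exactly the classical fact that log discrepancies of an SNC pair are computed by monomial valuations and are bounded below by $\min(1 - \mathrm{coeff}) > 0$ over the whole space. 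Everything else is the bookkeeping of the composition formula and the sign of $B$.
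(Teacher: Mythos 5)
Your argument is correct and is essentially the paper's own proof: both decompose $K^+_{V/X}$ (the paper's $K^+_{Z/X}$) via Corollary \ref{cor:(fg)*-g*f*} into the discrepancy of the klt SNC pair $(Y,(1-\e)\Gamma)$, the pullback of the effective divisor $K^+_{Y/X}+(1-\e)\Gamma$, and the effective $g$-exceptional error term $B$ (the paper's $E^{+,g}$). The only point to phrase carefully is that $a(F;Y,(1-\e)\Gamma)$ equals $-1+\e$ (not $>$) when $F$ is a component of $\Gamma$ itself, but for such $F$ the hypothesis gives $\ord_F(g^*K^+_{Y/X})=a^+(F,X)>-1+\e$ directly, which is exactly why the paper restricts the final check to divisors exceptional over $Y$.
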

\begin{rk}\label{rk:afterltwithoneresolution} Notice that the hypothesis of this lemma is satisfied if $a^+(E,X)>-1$ for all prime exceptional divisors $E$ on $Y$, since there are only finitely many such divisors.
\end{rk}
\begin{proof} Let $h:Z\rightarrow X$ be any other resolution, that we can assume is a log resolution factoring through $f$, and let $g:Z\rightarrow Y$ so that $h=f\circ g$. Let $F_Y$ be the reduced exceptional divisor of $f$, and $F_Z$ be the one of $h$. By \cite[2.13]{dFH},
$$
h^*(-K_X)-g^*f^*(-K_X)=E^{+,g}\geq0,
$$
which is effective and $g$-exceptional. Then
\begin{eqnarray*}
K^+_{Z/X}+(1-\e)F_Z&=&K_Z+h^*(-K_X)+(1-\e)F_Z=\\
&=&K_Z+g^*f^*(-K_X)+(1-\e)F_Z+E^{+,g}=\\
&=&K_Z-g^*(K_Y+(1-\e)F_Y)+(1-\e)F_Z+\\
&&+g^*\big(K_Y+f^*(-K_X)+(1-\e)F_Y\big)+\\
&&+E^{+,g}=\\
&=&K_Z-g^*(K_Y+(1-\e)F_Y)+(1-\e)F_Z+\\
&&+g^*\big(K^+_{Y/X}+(1-\e)F_Y\big)+\\
&&+E^{+,g}
\end{eqnarray*}
We only need to check this expression on the prime divisors on $Z$ which are exceptional over $Y$. Then, $K_Z-g^*(K_Y+(1-\e)F_Y)+(1-\e)F_Z$ has positive valuations, since $(Y,(1-\e)F_Y)$ has log terminal singularites; the term $K^+_{Y/X}+(1-\e)F_Y$ is effective by assumption, and thus so is its pullback; $E^{+,g}$ is effective, as noticed above.
\end{proof}

In the same spirit as \cite[8.2]{dFH}, we have the following result.
\begin{prop}\label{prop:sheaf criterion} Let $X$ be a normal variety. Then $X$ is lt${}^+$ if and only if there exists $\e\in\Q$, $\e>0$, such that, for all sufficiently divisible $m\geq1$, and for all resolutions of $X$,
$$
\regsh_X(mK_X)\cdot\regsh_Y\subseteq\regsh_Y\big(m(K_Y+(1-\e)F_Y)\big),
$$
where $F_Y$ is the reduced exceptional divisor of $f$.
\end{prop}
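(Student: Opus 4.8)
The plan is to translate the sheaf inclusion into an inequality of $\Q$-divisors on $Y$, and then let $m\to\infty$, using Lemma \ref{lm:ltwithoneresolution} to produce a single $\e$ that works on every resolution simultaneously.

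\emph{Reformulation.} Fix a resolution $f:Y\to X$ and $m\ge 1$ with $m\e\in\Z$, so that $m(K_Y+(1-\e)F_Y)$ is an integral Weil divisor (``sufficiently divisible'' will always include this divisibility by the denominator of $\e$). By the characterization \eqref{eq:naturalpullbackchar}, $\regsh_Y(-f^\natural(-mK_X))=(\regsh_X(mK_X)\cdot\regsh_Y)\dual\dual$. Since $\regsh_Y\big(m(K_Y+(1-\e)F_Y)\big)$ is reflexive, the inclusion in the statement is equivalent to $\regsh_Y(-f^\natural(-mK_X))\subseteq\regsh_Y\big(m(K_Y+(1-\e)F_Y)\big)$, and hence, both being divisorial sheaves on the normal variety $Y$, to the divisor inequality $-f^\natural(-mK_X)\le m(K_Y+(1-\e)F_Y)$. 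Using $\tfrac1m f^\natural(-mK_X)=K^+_{m,Y/X}-K_Y$, this reads
$$
K^+_{m,Y/X}+(1-\e)F_Y\ \ge\ 0 .
$$
Note this holds automatically along any prime divisor not contracted by $f$: there $X$ is smooth, $F_Y$ has coefficient $0$, and a direct computation at the generic point shows that $K^+_{m,Y/X}$ (and hence $K^+_{Y/X}=\liminf_m K^+_{m,Y/X}$) has coefficient $0$ there as well. So the content of the inequality is carried by the exceptional divisors of $f$.

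\emph{($\Rightarrow$).} Assume $X$ is lt${}^+$. Pick a log resolution; it has finitely many exceptional divisors and $a^+(E,X)>-1$ for each, so there is $\e\in\Q_{>0}$ with $a^+(E,X)>-1+\e$ for all of them, whence by Lemma \ref{lm:ltwithoneresolution}, $a^+(F,X)>-1+\e$ for \emph{every} prime divisor $F$ over $X$. Thus for any resolution $f:Y\to X$ we have $\ord_E(K^+_{Y/X})=a^+(E,X)>-1+\e$ on every exceptional $E$, and $0$ on every non-exceptional one, so $K^+_{Y/X}+(1-\e)F_Y\ge 0$. By \eqref{eq:K-<=K^+}, $K^+_{m,Y/X}\ge K^+_{Y/X}$, hence $K^+_{m,Y/X}+(1-\e)F_Y\ge 0$ for every $m$, and by the Reformulation the desired inclusion holds for every sufficiently divisible $m$ and every resolution.

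\emph{($\Leftarrow$).} Assume such an $\e$ exists, and fix a log resolution $f:Y\to X$. By the Reformulation, $K^+_{m,Y/X}+(1-\e)F_Y\ge 0$ for all sufficiently divisible $m$; given an arbitrary $m$, pick $q$ with $mq$ sufficiently divisible and use $K^+_{m,Y/X}\ge K^+_{mq,Y/X}$ from \eqref{eq:K-<=K^+} to get $K^+_{m,Y/X}+(1-\e)F_Y\ge 0$ for \emph{all} $m$. Taking the coefficient-wise $\liminf$ and invoking \eqref{K-(Y/X)=limsupK-(m,Y/X)} yields $K^+_{Y/X}+(1-\e)F_Y\ge 0$, i.e. $a^+(E,X)\ge -1+\e$, in particular $a^+(E,X)>-1+\tfrac\e2$, for every exceptional $E$ on $Y$. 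Then Lemma \ref{lm:ltwithoneresolution} gives $a^+(F,X)>-1+\tfrac\e2>-1$ for all prime divisors $F$ over $X$, so $X$ is lt${}^+$.

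The only step needing care is the Reformulation: one must handle the reflexive hulls and signs so that the sheaf inclusion comes out as exactly $K^+_{m,Y/X}+(1-\e)F_Y\ge 0$ (rather than its negative or a version with $\regsh_X(-mK_X)$). Everything else is a formal consequence of \eqref{eq:K-<=K^+}, \eqref{K-(Y/X)=limsupK-(m,Y/X)} and Lemma \ref{lm:ltwithoneresolution}.
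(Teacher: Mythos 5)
Your proof is correct and follows essentially the same route as the paper: the same reduction of the sheaf inclusion to the divisor inequality $K^+_{m,Y/X}+(1-\e)F_Y\ge0$ via reflexive hulls, and the same forward argument combining Lemma \ref{lm:ltwithoneresolution} with $K^+_{m,Y/X}\ge K^+_{Y/X}$. The only divergence is in the converse, which the paper proves by contraposition (exhibiting a divisor $E$ with $a^+(E,X)\le-1$ on a resolution of $\regsh_X(mK_X)$ where the inclusion visibly fails), whereas you argue directly on one log resolution, pass to the $\liminf$ over $m$, and invoke Lemma \ref{lm:ltwithoneresolution} a second time --- logically equivalent, and your version only needs the hypothesis on a single log resolution.
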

\begin{proof} Let $X$ be lt${}^+$. By lemma \ref{lm:ltwithoneresolution}, there exists $\e$ such that $a^+(E,X)>-1+\e$ for all prime divisors $E$, exceptional over $X$ (choose an arbitrary log resolution, and choose the value of $\e$ that works for that resolution: it will work for all). We can choose $\e\in\Q$. If $f$ and $m$ are chosen as in the statement, with $m$ divisible enough so that $m\e\in\N$,
\begin{eqnarray*}
m(K_Y+(1-\e)F_Y)+f^\natural(-mK_X)&\geq&m(K_Y+(1-\e)F_Y)+f^*(-mK_X)=\\
&=&m\big(K_Y+(1-\e)F_Y+f^*(-K_X)\big)\geq\\
&\geq&0.
\end{eqnarray*}
Thus,
\begin{eqnarray*}
\regsh_X(mK_X)\cdot\regsh_Y&\subseteq&\big(\regsh_X(mK_X)\cdot\regsh_Y\big)\dual\dual=\\
&=&\regsh_Y(-f^\natural(-mK_X))\subseteq\regsh_Y\big(m(K_Y+(1-\e)F_Y)\big).
\end{eqnarray*}
Conversely, suppose that $X$ is not lt${}^+$. Let $E$ by a prime divisor, exceptional over $X$, such that $a^+(E,X)\leq-1$. Let $f:Y\rightarrow X$ be a resolution, with $E$ as one of the components of the exceptional divisor. Since $a^+(E,X)\leq-1$, for all $\e\in\Q$, $\e>0$, there exists $m$ such that $\val_E(K_Y+\frac{1}{m}f^\natural(-mK_X))<-1+\e$. The same will be true for all $m'\geq m$, so we can assume that $m\e\in\N$. By further blowing up, we can assume that $f$ is a resolution of $\regsh_X(mK_X)$, since the value of $\val_E(K_Y+\frac{1}{m}f^\natural(-mK_X))$ does not actually depend on the smooth model where we compute it. But then, 
$$
\regsh_X(mK_X)\cdot\regsh_Y=\big(\regsh_X(mK_X)\cdot\regsh_Y\big)\dual\dual\nsubseteq\regsh_Y\big(m(K_Y+(1-\e)F_Y)\big).
$$
\end{proof}
\begin{rk}\label{rk:after sheaf criterion} The necessary condition is still true if we substitute $\e$ with any other $0<\e'\leq\e$, or if $Y\rightarrow X$ is just a proper birational map between normal varieties. Moreover, the divisibility condition on $m$ is only to ensure that $m\e\in\Z$.
\end{rk}
\subsection{The canonical ring}
Let us first recall some standard definitions from the Minimal Model Program (for references, \cite{komo}, \cite{kol}, \cite{bchm}, and \cite{hk}).
\begin{df} Let $X$ be a normal projective variety and let $D$ be a $\Q$-divisor on $X$. We define
$$
R(X,D):=\bigoplus_{m\geq1}H^0(\regsh_X(\lfloor mD\rfloor))
$$
and
$$
\Rscr(X,D):=\bigoplus_{m\geq1}\regsh_X(\lfloor mD\rfloor).
$$
If $f:X\rightarrow U$ is a projective morpishm of quasi-projective normal varieties we define the relative version
$$
\Rscr(f,D):=\bigoplus_{m\geq1}f_*\regsh_X(\lfloor mD\rfloor).
$$
\end{df}
\begin{rk} All the above have a natural ring structure. Moreover, $\Rscr(X,D)$ is an $\regsh_X$-algebra, while $\Rscr(f,D)$ is an $\regsh_U$-algebra.
\end{rk}
With the above criterion, proposition \ref{prop:sheaf criterion}, we can prove the following result (which was originally proven in the canonical case by the second author in \cite[3.6]{stefano2}).
\begin{thm}\label{thm:fgcr} If $X$ is a lt${}^+$ normal variety, then $\Rscr(X,K_X)$ is finitely generated.
\end{thm}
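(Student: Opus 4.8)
The strategy follows \cite[3.6]{stefano2}, which treated the canonical case: realize $\Rscr(X,K_X)$ as the relative canonical ring of a mild klt pair, and then invoke the finite generation of such rings from \cite{bchm}. Finite generation over $X$ is local on $X$, so I would first replace $X$ by an affine --- hence quasi-projective --- open subset and fix a \emph{projective} log resolution $f\colon Y\to X$ with reduced exceptional divisor $F_Y$. Since $X$ is lt${}^+$, the finitely many exceptional prime divisors $E_1,\dots,E_r$ of $f$ all satisfy $a^+(E_i,X)>-1$, so I may choose a rational $\e\in(0,1)$ with $a^+(E_i,X)>-1+\e$ for every $i$; this is precisely the $\e$ supplied by Proposition~\ref{prop:sheaf criterion} (see also Remark~\ref{rk:after sheaf criterion}) for the resolution $f$.

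The main step is the identity
$$
\regsh_X(mK_X)=f_*\regsh_Y\bigl(\lfloor m(K_Y+(1-\e)F_Y)\rfloor\bigr)\qquad\text{for every }m\geq 1 .
$$
The inclusion ``$\supseteq$'' is formal: one always has $f_*\regsh_Y(G)\subseteq\regsh_X(f_*G)$ for an integral divisor $G$ on $Y$, and here $f_*\lfloor m(K_Y+(1-\e)F_Y)\rfloor=mK_X$ because $F_Y$ is $f$-exceptional and $f_*K_Y=K_X$. For ``$\subseteq$'', the choice of $\e$ together with \eqref{eq:K-<=K^+} yields $K^+_{m,Y/X}+(1-\e)F_Y\geq 0$ for all $m$ --- here the non-exceptional part of $K^+_{Y/X}=K_Y+f^*(-K_X)$ vanishes because $f_*K_Y=K_X$ and $f^*(-K_X)$ restricts to $-K_X$ over the locus where $f$ is an isomorphism. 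Since $mK^+_{m,Y/X}=mK_Y+f^\natural(-mK_X)$ is an \emph{integral} divisor with vanishing non-exceptional part, this inequality rounds up to $-f^\natural(-mK_X)\leq\lfloor m(K_Y+(1-\e)F_Y)\rfloor$, and hence by \eqref{eq:naturalpullbackchar}
$$
\regsh_X(mK_X)\cdot\regsh_Y\subseteq\bigl(\regsh_X(mK_X)\cdot\regsh_Y\bigr)\dual\dual=\regsh_Y\bigl(-f^\natural(-mK_X)\bigr)\subseteq\regsh_Y\bigl(\lfloor m(K_Y+(1-\e)F_Y)\rfloor\bigr);
$$
pushing forward, together with the adjunction inclusion $\regsh_X(mK_X)\hookrightarrow f_*\bigl(\regsh_X(mK_X)\cdot\regsh_Y\bigr)$, completes ``$\subseteq$''.

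Summing over $m$, and noting that both sides carry the same multiplication (that of rational functions inside $\mathcal{K}_X=\mathcal{K}_Y$), the identity upgrades to an isomorphism of graded $\regsh_X$-algebras $\Rscr(X,K_X)\cong\Rscr\bigl(f,K_Y+(1-\e)F_Y\bigr)$. Finally $\lfloor(1-\e)F_Y\rfloor=0$ and $F_Y$ has simple normal crossing support on the smooth variety $Y$, so $(Y,(1-\e)F_Y)$ is klt; as $f$ is projective and birational, the relative finite generation theorem of \cite{bchm} shows that $\Rscr\bigl(f,K_Y+(1-\e)F_Y\bigr)$ is a finitely generated $\regsh_X$-algebra, and therefore so is $\Rscr(X,K_X)$.

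The step I expect to require the most care is the ``$\subseteq$'' inclusion, and more precisely the passage from the coefficientwise inequality $K^+_{m,Y/X}+(1-\e)F_Y\geq 0$ to the \emph{integral} inequality $-f^\natural(-mK_X)\leq\lfloor m(K_Y+(1-\e)F_Y)\rfloor$ for \emph{all} $m$, not merely for those $m$ with $m\e\in\Z$. Establishing it for every $m$ is exactly what makes the two $\regsh_X$-algebras coincide in each degree, and this is what lets me bypass the delicate separate argument that $\Rscr(X,K_X)$ is finite over a Veronese subalgebra.
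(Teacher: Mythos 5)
Your argument is correct, and it reaches the conclusion by a genuinely more direct route than the paper. The paper first applies \cite{bchm} to a resolution $p:\Xtilde\to X$ to run a relative MMP and construct the model $X^c=\Proj_X\Rscr(p,K_{\Xtilde})$, then uses the lt${}^+$ hypothesis (via Proposition \ref{prop:sheaf criterion}) to produce the inclusion $\regsh_{X^c}\cdot\regsh_X(dmK_X)\subseteq\regsh_{X^c}\big(dm(K_{X^c}+(1-\e)F_{X^c})\big)$ on $X^c$ for a Veronese subalgebra (i.e.\ only for degrees divisible by $d$ with $d\e\in\Z$), identifies $\Rscr(X,dK_X)$ with $\Rscr(f,dK_{X^c}+d(1-\e)F_{X^c})$, invokes \cite[92]{kol} on the klt variety $X^c$, and finally descends from the Veronese via \cite[5.68]{hk}. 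You instead stay on a single projective log resolution $Y$, and your key refinement --- that $-f^\natural(-mK_X)$ is an \emph{integral} divisor with trivial non-exceptional part, so the coefficientwise bound $K^+_{m,Y/X}+(1-\e)F_Y\geq0$ rounds down to $-f^\natural(-mK_X)\leq\lfloor m(K_Y+(1-\e)F_Y)\rfloor$ for \emph{every} $m$ --- upgrades the paper's ``sufficiently divisible'' inclusion to an identification $\Rscr(X,K_X)\cong\Rscr(f,K_Y+(1-\e)F_Y)$ in all degrees, after which a single application of \cite{bchm} to the klt pair $(Y,(1-\e)F_Y)$ (big over $X$ because $f$ is birational) finishes the proof; this dispenses with the MMP step, with Koll\'ar's theorem, and with the Veronese argument. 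What the paper's longer route buys is the explicit model $X^c$ together with the observation that it is small over $X$ and klt, which feeds directly into Remark \ref{rk:allsamealgs} and Corollary \ref{cor:canmod}; your argument recovers the same object only a posteriori as $\Proj_X$ of the finitely generated algebra, i.e.\ as the relative log canonical model of $(Y,(1-\e)F_Y)$.
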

\begin{proof} We may assume that $X$ is affine.
Let $p:\Xtilde \rightarrow X$ be a resolution (which we can assume projective, \cite[0.2]{komo}). By \cite{bchm} $\Rscr{}(p,K_{\tilde X})$ is finitely generated. Running the relative Minimal Model Program for $\Xtilde$ over $X$, we obtain the model $X^c=\Proj_X\Rscr(p,K_{\tilde X})$, with induced morphism $f:X^c\rightarrow X$. Let $F_{X^c}$ be the reduced exceptional divisor of $f$. The variety $X^c$ has Kawamata log terminal singularities.

Since $X$ is lt${}^+$, there is a rational $\e>0$ such that, for any $m>0$ sufficiently divisible, there is an inclusion
$$
\mathcal O _{X^c}\cdot\regsh _X(mK_X)\subseteq\regsh _{X^c}\big(m(K_{X^c}+(1-\e)F_{X^c})\big).
$$
To be precise, if $d$ is a positive integer such that $d\e\in\Z$, then for any $m\geq1$,
$$
\mathcal O _{X^c}\cdot\regsh _X(dmK_X)\subseteq\regsh _{X^c}\big(dm(K_{X^c}+(1-\e)F_{X^c})\big)
$$
(see proposition \ref{prop:sheaf criterion} and remark \ref{rk:after sheaf criterion}). Pushing this forward we obtain inclusions
$$
f_*(\regsh_{X^c}\cdot\regsh_X(dmK_X))\subseteq f_*\regsh_{X^c}\big(dm(K_{X^c}+(1-\e)F_{X^c})\big)\subseteq\regsh_X(dmK_X).
$$
Since the left and right hand sides have isomorphic global sections, then
$$
H^0\big(f_*\regsh_{X^c}\big(m(dK_{X^c}+d(1-\e)F_{X^c})\big)\big)\cong H^0(\regsh_X(mdK_X)).
$$
Since $X$ is affine, $\regsh_X(mK_X)$ is globally generated and hence
$$
f_*\regsh_{X^c}\big(m(dK_{X^c}+d(1-\e)F_{X^c})\big)=\regsh_X(mK_X).
$$
But then $\Rscr(X,dK_X)\cong\Rscr(f,dK_{X^c}+d(1-\e)F_{X^c})$ is finitely generated over $\regsh_X$, since $X^c$ has Kawamata log terminal singularities (\cite[92]{kol}). Finally, since $\Rscr(X,dK_X)=\Rscr(X,K_X)^{(d)}$ is finitely generated, then $\Rscr(X,K_X)$ is finitely generated over $\regsh_X$ (\cite[5.68]{hk}).
\end{proof}

\begin{rk}\label{rk:allsamealgs} Note that we have seen that 
$$
\Rscr(X,dK_X)\cong\Rscr(f,dK_{X^c}+d(1-\e)F_{X^c})\cong\Rscr(p,dK_{\Xtilde}).
$$
Hence
\begin{eqnarray*}
X^c&=&\Proj_X\Rscr(p,K_{\Xtilde})\cong\Proj_X\Rscr(p,K_{\Xtilde})^{(d)}=\Proj_X\Rscr(p,dK_{\Xtilde})\cong\\
&\cong&\Proj_X\Rscr(X,dK_X)=\Proj_X\Rscr(X,K_X)^{(d)}\cong\Proj_X\Rscr(X,K_X),
\end{eqnarray*}
and with this chain of isomorphisms $\regsh_{X^c}(1)$ is preserved (\cite[Ex II.5.13]{har}), and so $X^c \to X$ is a small mophism. \emph{A posteriori} then, we don't have an exceptional divisor for $f$, and we have inclusions
$$
\mathcal O _{X^c}\cdot\regsh _X(mK_X)\subseteq\regsh _{X^c}(mK_{X^c})
$$
for all $m\geq1$. Moreover, we have isomorphisms
$$
\Rscr(X,K_X)\cong\Rscr(f,K_{X^c})\cong\Rscr(p,K_{\Xtilde}).
$$
\end{rk}
\begin{rk}\label{rk:lt+fg} Notice that if $(X,\Delta)$ is a klt pair, then $X$ has lt${}^+$ singularities (lemma \ref{lm:klt=>lt+}), while there are examples of singularities which are lt${}^+$, but cannot be klt (\cite[4.1]{stefano}). Thus, lt${}^+$ is a class broader than klt for which we have finite generation of the algebra $\Rscr(X,K_X)$. Of course, our results relies heavily on \cite{bchm}.
\end{rk}
\begin{cor}\label{cor:canmod}  If $X$ is lt${}^+$, then the relative canonical model $X_{can}=\Proj_X\Rscr(X,K_X)$ exists and has log terminal singularities.
\end{cor}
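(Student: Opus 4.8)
The plan is to read off the corollary from Theorem~\ref{thm:fgcr} and from the structural analysis in Remark~\ref{rk:allsamealgs}; no genuinely new argument is needed.

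\emph{Existence.} The formation of the relative $\Proj$ over $X$ is local on the base, so to make sense of $X_{can}$ it suffices to know that $\Rscr(X,K_X)$ is a finitely generated quasi-coherent sheaf of $\regsh_X$-algebras. Restricting to an affine chart of $X$ puts us in exactly the situation of Theorem~\ref{thm:fgcr} (which is itself proved after reducing to the affine case), and that theorem supplies the required finite generation. Hence $X_{can}=\Proj_X\Rscr(X,K_X)$ is a well-defined scheme, projective and birational over $X$.

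\emph{Singularities.} Let $p\colon\Xtilde\to X$ be a projective resolution and let $X^c=\Proj_X\Rscr(p,K_{\Xtilde})$ be the relative canonical model obtained by running the relative Minimal Model Program for $\Xtilde$ over $X$, together with its induced morphism $f\colon X^c\to X$, exactly as in the proof of Theorem~\ref{thm:fgcr}; recall that $X^c$ has Kawamata log terminal singularities, being the output of the MMP applied to the smooth (in particular klt) variety $\Xtilde$, so in particular $K_{X^c}$ is $\Q$-Cartier. By Remark~\ref{rk:allsamealgs} there is a chain of isomorphisms of graded $\regsh_X$-algebras
$$
\Rscr(X,K_X)\cong\Rscr(f,K_{X^c})\cong\Rscr(p,K_{\Xtilde})
$$
which moreover preserves the tautological sheaf $\regsh_{X^c}(1)$, and hence induces an isomorphism $X_{can}\cong X^c$ over $X$. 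Transporting the singularity type across this isomorphism, $X_{can}$ has Kawamata log terminal singularities; since for a $\Q$-Gorenstein variety condition $\mathrm{M}_{>-1}$ is equivalent to being Kawamata log terminal, $X_{can}$ is log terminal, which is the assertion.

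\emph{Main difficulty.} There is essentially no obstacle here: the mathematical content is entirely contained in Theorem~\ref{thm:fgcr} and Remark~\ref{rk:allsamealgs}. The one point deserving a line of care is that the identification $X_{can}\cong X^c$ be natural enough to carry over the singularity type, which is guaranteed precisely because the isomorphisms of Remark~\ref{rk:allsamealgs} respect both the grading and the sheaf $\regsh_{X^c}(1)$.
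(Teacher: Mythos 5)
Your proposal is correct and follows the paper's own argument essentially verbatim: the paper also deduces existence from Theorem~\ref{thm:fgcr} and identifies $X_{can}$ with the klt model $X^c$ via Remark~\ref{rk:allsamealgs}. Your extra care about the affine reduction and the preservation of $\regsh_{X^c}(1)$ only makes explicit what the paper leaves implicit.
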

\begin{proof} From the proof of theorem \ref{thm:fgcr} and remark \ref{rk:allsamealgs}, we have the existence of the relative canonical model $X_{can}=\Proj_X\Rscr(X,K_X)=X^c$, with log terminal singularities.
\end{proof}
\begin{rk} Notice that, using the methods of \cite[Proposition 4.4]{stefano}, lemma \ref{lm:ltwithoneresolution}, and remark \ref{rk:afterltwithoneresolution}, we can show directly that $\Proj_X\Rscr(X,K_X)$ has log terminal singularities (assuming the finite generation of $\Rscr(X,K_X)$). We reproduce the argument here, since is similar to the one we will use to prove theorem \ref{thm:kltiffanticanonicalfg}.

Since $X$ is lt${}^+$, for any sufficiently high log resolution $f:Y\rightarrow X$, we have $K_Y-\frac{1}{m}f^\natural(-mKX)>-1$. By \cite[lemma 6.2]{komo}, there exists a small birational morphism $\pi:X^+\rightarrow X$ such that $K_{X^+}$ is a $\pi$-ample $\Q$-Cartier divisor. Also, for this morphism we have that $\pi^\natural(-mK_X)=-mK_{X^+}$. Let us consider a common log resolution of $X$ and $X^+$, $f:Y\rightarrow X$ and $g:Y\rightarrow X^+$.

Let us consider the map $\regsh_{X^+}\cdot\regsh_X(mK_X)\rightarrow\regsh_{X^{+}}(mK_{X^+})$. Since $\pi^{-1}_*(K_X)=K_{X^+}$ is $\pi$-ample, $\regsh_{X^+}(mK_{X^+})$ is globally generated over $X$ for $m$ sufficiently divisible, hence we have an isomorphism of sheaves. Thus
$$
K_Y-g^*(K_{X^+})=K_Y+\frac{1}{m}g^*(-mK_{X^+})=K_Y+\frac{1}{m}f^\natural(-mK_X)>-1,
$$
where the last equality holds by [dFH09, Lemma 2.7]. By lemma \ref{lm:ltwithoneresolution} and remark \ref{rk:afterltwithoneresolution}, the canonical model $X^+$ has lt${}^+$ singularities, that is, log terminal singularities since it is $\Q$-Gorenstein.
\end{rk}
\subsection{Finite generation and singularities}
Here we prove a result, which is again a generalization of a result of the second author, in \cite[3.7]{stefano2}.
\begin{thm}\label{thm:kltiffanticanonicalfg} Let $X$ be a normal variety with lt${}^+$ singularities. Then, $\Rscr(X,-K_X)$ is a finitely generated $\regsh_X$-algebra if and only if $X$ is log terminal.
\end{thm}
\begin{proof} If $X$ is log terminal, then $\Rscr(X,-K_X)$ is finitely generated by \cite[Theorem 92]{kol}.

Let $\Rscr(X,-K_X)$ be finitely generated. By \cite[Lemma 6.2]{komo} there is a small map $\pi:X^-\rightarrow X$ such that $\pi^{-1}_*(-K_X)$ is $\Q$-Cartier and  $\pi$-ample. For any $m$ sufficiently divisible, consider the natural map $\regsh_{X^-}\cdot\regsh_X(-mK_X)\rightarrow\regsh_{X^-}(-mK_{X^-})$, which, since the map is small, is an isomorphism of sheaves. Thus, considering $f:Y\rightarrow X$ and $g:Y\rightarrow X^-$, a common log resolution, we have
$$
K_Y+\frac{1}{m}g^*(-mK_{X^-})=K_Y+\frac{1}{m}g^*(\pi^\natural(-mK_X))=K_Y+\frac{1}{m}f^\natural(-mK_X)>-1.
$$
This means that $X^-$ has at most lt${}^+$ singularities. Since $K_{X^-}$ is $\Q$-Cartier, $X^-$ is log terminal.

Choosing a general ample $\Q$-divisor $H^-\sim_{\Q,X}-K_{X^-}$, let $1||m$, and $G^-\in|mH^-|$ be a general irreducible divisor. Then, setting $\Delta^-:=\frac{G^-}{m}$, we have that $K_{X^-}+\Delta^-\sim_{\Q,X}0$ is still log terminal and $\pi^*(K_X+\pi_*\Delta^-)=_\Q K_{X^-}+\Delta^-$. Hence $(X,\Delta=\pi_*\Delta^-)$ is log terminal.
\end{proof}
\begin{cor} Let $X$ be a lt${}^+$ normal variety. Then $X$ is log terminal if and only if $\Rscr(X,D)$ is finitely generated for any Weil divisor $D$.
\end{cor}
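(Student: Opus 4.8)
The plan is to deduce the statement from Theorem~\ref{thm:kltiffanticanonicalfg}, Theorem~\ref{thm:dFH 7.2}, and a finite generation result for klt varieties; one direction is essentially free. If $\Rscr(X,D)$ is finitely generated for \emph{every} Weil divisor $D$, then in particular $\Rscr(X,-K_X)$ is finitely generated, and since $X$ is lt${}^+$ by hypothesis, Theorem~\ref{thm:kltiffanticanonicalfg} immediately yields that $X$ is log terminal.

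For the converse, suppose $X$ is log terminal. By Theorem~\ref{thm:dFH 7.2} there is an effective boundary $\Delta$ with $(X,\Delta)$ klt; in particular $K_X+\Delta$ is $\Q$-Cartier. It then suffices to show that for a klt pair $(X,\Delta)$ the algebra $\Rscr(X,D)$ is finitely generated as an $\regsh_X$-algebra for every Weil divisor $D$; as in the proof of Theorem~\ref{thm:fgcr} this is local on $X$, so we may assume the base affine. The route I would take is to pass to a small $\Q$-factorialization $\pi\colon X'\to X$, which exists because $(X,\Delta)$ is klt (\cite{bchm}). Then $X'$ is $\Q$-factorial and klt and, since $\pi$ contracts no divisor, $\pi_*\regsh_{X'}(m\pi^{-1}_*D)=\regsh_X(mD)$ for all $m\geq1$, so that $\Rscr(X,D)\cong\Rscr(\pi,\pi^{-1}_*D)$. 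Now $\pi^{-1}_*D$ is $\Q$-Cartier because $X'$ is $\Q$-factorial, and the finite generation of the divisorial algebra attached to a $\Q$-Cartier divisor on a klt variety over an affine base is one of the standard consequences of the Minimal Model Program (\cite[Theorem 92]{kol}). Alternatively, \cite[Theorem 92]{kol} may be invoked directly for the klt pair $(X,\Delta)$ and the Weil divisor $D$, skipping the $\Q$-factorialization.

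The only genuinely non-formal point is this last step: controlling $\Rscr(X,D)$ for a Weil divisor $D$ that need not be $\Q$-Cartier. The two ingredients that make it work are the existence of a small $\Q$-factorialization (where smallness of $\pi$ is exactly what gives $\Rscr(X,D)\cong\Rscr(\pi,\pi^{-1}_*D)$, via the good behaviour of pushforward of rank one reflexive sheaves under a birational morphism with no exceptional divisor) and the finite generation of $\Q$-Cartier divisorial algebras on klt varieties, which ultimately rests on \cite{bchm}. Everything else is a direct application of the results already established in this section; as a sanity check, the cases $D=\pm K_X$ recover the contents of Theorems~\ref{thm:fgcr} and~\ref{thm:kltiffanticanonicalfg}.
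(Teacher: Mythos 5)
Your proof is correct and follows essentially the same route as the paper: the reverse implication specializes to $D=-K_X$ and applies Theorem~\ref{thm:kltiffanticanonicalfg}, and the forward implication is exactly the citation of \cite[Theorem 92]{kol} that the paper uses, which you merely unpack via the standard small $\Q$-factorialization argument.
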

\begin{proof} If $X$ is log terminal, then this is \cite[Theorem 92]{kol}. Conversely, in particular $\Rscr(X,-K_X)$ is finitely generated, and by the previous theorem, $X$ is log terminal.
\end{proof}
\begin{rk}\label{rk:flip} In the spirit of corollary \ref{cor:canmod}, if $X$ has klt singularities, the birational map
$$
\xymatrix{\Proj_X\Rscr(X,-K_X) \ar@{-->}[rr] \ar[dr] & & \Proj_X\Rscr(X,K_X) \ar[dl]\\
& X &}
$$
is a flip. In particular, if searching for minimal (or canonical) models, the finite generation of the algebra $\Rscr(X,-K_X)$ is not essential.
\end{rk}

\section{Defect ideals}

In this section we focus on the behavior of the Defect Ideals introduced by Boucksom, de Fernex and Favre in \cite{BdFF} in the context of log terminal singularities according to \cite{dFH}. 

\begin{df} Let $X$ be a normal variety and $D$ any divisor on $X$ and $m$ any positive natural number. Let $f_m:Y_m \to X$ be a log resolution of $(X, \regsh_X(mD)\otimes \regsh_X(-mD))$. The $m$-th defect ideal of $D\subseteq X$ is given by
$$
\dfrak^m(D):= f_{m,*} \regsh_Y \left(\lfloor \frac{1}{m}(-f_m^{\natural}(-mD)-f_m^{\natural}(mD)) \rfloor \right).
$$
\end{df}

Notice that $-f_m^{\natural}(-mD)-f_m^{\natural}(mD)\leq0$, so that indeed $\dfrak^m(D)$ is an ideal.

It is in general very hard to control the behavior of the defect ideals when $m$ changes. We will prove that, when the divisorial ring associated to $D$ is finitely generated (with no assumptions on the ring associated to $-D$), then the sequence of ideals stabilizes when $m$ is big enough. 

\begin{thm}
Let $X$ be a normal variety and $D$ a Weil divisor such that $\Rscr(X, D)$ is finitely generated. Then the limit of the defect ideals exists and it is an ideal:
$$
\Dfrak(D):= \limsup_{m\to \infty} f_{m,*}\regsh_{Y_m} \left(\lfloor\frac{1}{m}(-f_m^{\natural}(-mD)-f_m^{\natural}(mD))\rfloor \right).
$$
\end{thm}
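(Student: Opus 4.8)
The statement is local on $X$, so I may assume $X$ affine; the plan is to recognize the $\dfrak^m(D)$ as a variant of the multiplier ideals of a graded family of ideals, establish monotonicity along divisibility, and conclude with the ascending chain condition, using finite generation of $\Rscr(X,D)$ to make the construction unambiguous. \textbf{Step 1: reformulation.} I would first observe that $\dfrak^m(D)$ depends only on $m$ and on the ideal sheaf $\mathfrak a_m:=\regsh_X(mD)\cdot\regsh_X(-mD)\subseteq\regsh_X$, the inclusion holding because $(\regsh_X(mD)\cdot\regsh_X(-mD))\dual\dual=\regsh_X$ (the divisorial parts cancel). Indeed, since $\div$ is additive on products of fractional ideals, on a log resolution $f_m:Y_m\to X$ of $\mathfrak a_m$ we have
$$
-f_m^\natural(-mD)-f_m^\natural(mD)=-\div\!\big(\regsh_X(mD)\cdot\regsh_{Y_m}\big)-\div\!\big(\regsh_X(-mD)\cdot\regsh_{Y_m}\big)=-\div\!\big(\mathfrak a_m\cdot\regsh_{Y_m}\big),
$$
so $\dfrak^m(D)=f_{m,*}\regsh_{Y_m}\!\big(\lfloor-\tfrac1m\div(\mathfrak a_m\cdot\regsh_{Y_m})\rfloor\big)$ is exactly the ``rounded push-forward'' of $\mathfrak a_m$ with exponent $1/m$, i.e.\ the same construction as the multiplier ideal of $\mathfrak a_m^{1/m}$ but without the relative canonical correction. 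I would also record that $\{\mathfrak a_m\}$ is a graded family: from $\regsh_X(kD)\cdot\regsh_X(\ell D)\subseteq\regsh_X((k+\ell)D)$ (and likewise for $-D$) one gets $\mathfrak a_k\cdot\mathfrak a_\ell\subseteq\mathfrak a_{k+\ell}$, in particular $\mathfrak a_m^{\,k}\subseteq\mathfrak a_{km}$.

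\textbf{Step 2: monotonicity along divisibility.} Next I would show $\dfrak^m(D)\subseteq\dfrak^{km}(D)$ for every $k\ge1$. On a log resolution $f:Y\to X$ dominating $Y_m$ and $Y_{km}$, the sub-additivity of the natural valuation recalled in Section~2 ($\nu^\natural(kE)\le k\,\nu^\natural(E)$ for every divisor $E$, hence $f^\natural(kE)\le k\,f^\natural(E)$ coefficient-wise) applied to $E=mD$ and $E=-mD$ gives
$$
-f^\natural(-kmD)-f^\natural(kmD)\ \ge\ k\big(-f^\natural(-mD)-f^\natural(mD)\big)
$$
on $Y$; dividing by $km$, rounding down (monotone), and applying $\regsh_Y(-)$ and $f_*$ yields the inclusion. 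Equivalently, this is the standard monotonicity of the multiplier-type construction in the ideal together with $\mathfrak a_m^{\,k}\subseteq\mathfrak a_{km}$.

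\textbf{Step 3: role of finite generation.} A subtlety I must address is that the rounded push-forward of Step~1 can grow under further blow-ups of $Y_m$ (since $\lfloor\psi^*G\rfloor\ge\psi^*\lfloor G\rfloor$, possibly strictly), so $\dfrak^m(D)$ has to be understood as the value computed on a sufficiently high log resolution, and for Step~2 the two ideals must be compared on a common such model. Finite generation of $\Rscr(X,D)$ is what lets me do this coherently for all sufficiently divisible $m$ at once: picking $m_0$ with $\Rscr(X,m_0D)$ generated in degree $1$ and setting $X_D:=\Proj_X\Rscr(X,m_0D)$, the sheaf $\regsh_X(mD)\cdot\regsh_{X_D}$ is invertible for $m_0\mid m$, with $\tfrac1m$ of its divisor a fixed $\Q$-Cartier divisor independent of such $m$; fixing a log resolution $W\to X$ through $X_D$, the ``$(-mD)$-part'' $-\tfrac1m f_W^\natural(-mD)$ is then the pull-back of a fixed divisor for all $m_0\mid m$, and — since the remaining term $-\tfrac1m f_W^\natural(mD)$ enters anti-effectively in the combination of Step~1 — I would argue that the defect divisor and its round-down are already computed on $W$, stably under further blow-ups. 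Thus $\dfrak^m(D)$ is unambiguous for $m_0\mid m$ and Steps~1–2 apply to all such $m$. I expect this to be the main obstacle: the defect ideal sees both $\regsh_X(mD)$ and $\regsh_X(-mD)$, but only the first has a finitely generated section algebra, so the argument must rule out that the uncontrolled factor $\regsh_X(-mD)\cdot\regsh_W$ spoils the stabilization.

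\textbf{Step 4: conclusion.} By Step~2 the ideal sheaves $\{\dfrak^{m}(D):m_0\mid m\}$ form a family increasing along divisibility; since $\regsh_X$ is Noetherian and $X$ is quasi-compact, along the cofinal chain $m=n!\,m_0$ this stabilizes, at some value $\dfrak^{m_1}(D)$ with $m_0\mid m_1$, and one checks that then $\dfrak^{m}(D)=\dfrak^{m_1}(D)$ for every $m$ divisible by $m_1$, while $\dfrak^{m}(D)\subseteq\dfrak^{m_1}(D)$ for every $m$. Hence $\limsup_{m\to\infty}\dfrak^m(D)=\dfrak^{m_1}(D)=\sum_m\dfrak^m(D)$, a coherent ideal sheaf, which is the assertion; in particular $\Dfrak(D)$ is realized as $\dfrak^m(D)$ for all sufficiently divisible $m$.
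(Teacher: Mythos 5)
Your proposal is correct, and its skeleton is the same as the paper's: prove the monotonicity $\dfrak^m(D)\subseteq\dfrak^{km}(D)$ along divisibility and conclude by Noetherian stabilization. There are two differences worth recording. First, in your Step 2 you obtain the key inequality
$$
-f^\natural(-kmD)-f^\natural(kmD)\ \geq\ k\big({-f^\natural(-mD)}-f^\natural(mD)\big)
$$
by applying the subadditivity $f^\natural(kE)\leq k f^\natural(E)$ to \emph{both} $E=mD$ and $E=-mD$; the paper instead uses finite generation of $\Rscr(X,D)$ to get the exact identity $f^\natural(-mkD)=kf^\natural(-mD)$ for the first term and subadditivity only for the second. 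Your version shows the monotonicity of the $\dfrak^m(D)$ is actually unconditional, which is a genuine (small) improvement in transparency, and it forces the question of where finite generation is really spent. Second, you answer that question by locating the role of finite generation in the resolution-independence of the rounded push-forward (your Step 3): since $\lfloor\psi^*G\rfloor\geq\psi^*\lfloor G\rfloor$ can be strict, $\dfrak^m(D)$ and $\dfrak^{km}(D)$ must be compared on a common, sufficiently high model, and one must know the ideals do not keep growing under further blow-ups. The paper silently absorbs this into the phrase ``choosing a resolution $f_{mk}$ high enough''; you flag it explicitly and sketch a stabilization argument via $X_D=\Proj_X\Rscr(X,m_0D)$, but your ``I would argue that the round-down is already computed on $W$'' is not actually carried out. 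That sketch is the only soft spot in your write-up; it is, however, no less complete than what the paper itself offers on the same point, so I do not count it as a gap relative to the paper's standard of proof.
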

\begin{proof}
For every map $f: Y \to X$, we already know that $f^{\natural}(D)\geq \frac{f^{\natural}(mD)}{m}$. Even more, if $\Rscr(X, D)$ is finitely generated, there exists an $m_0$ such that $f^{\natural}(-mkD) = kf^{\natural}(-mD)$ for every $k>0$. Hence we have (choosing a resolution $f_{mk}$ high enough) 
\begin{eqnarray*}
\dfrak^{mk}(D)&=& f_{km,*} \regsh_Y \left(\lfloor \frac{1}{mk}(-f_{mk}^{\natural}(-mkD)-f_{mk}^{\natural}(mkD)) \rfloor\right)=\\
&=& f_{km,*} \regsh_Y \left(\lfloor \frac{-f_{mk}^{\natural}(-mD)}{m}- \frac{f_{mk}^{\natural}(mkD)}{mk} \rfloor\right) \supseteq\\
&\supseteq & f_{km,*} \regsh_Y \left(\lfloor \frac{1}{m}(-f_{mk}^{\natural}(-mD)-f_{mk}^{\natural}(mD)) \rfloor\right)=\\
&=& \mathfrak{d}^m(D).
\end{eqnarray*}

By noetherianity these ideals will stabilize for $k \gg 0$.
\end{proof}

\begin{cor} If $\Dfrak(D)=\regsh_X $ then $D$ is numerically Cartier in the sense of \cite{BdFF}, and $\Rscr(-D,X)$ is finitely generated.
\end{cor}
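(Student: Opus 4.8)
The plan is as follows. The first assertion is essentially a matter of citing \cite{BdFF}: the ideals $\dfrak^m(D)$ used here are the defect ideals of \cite{BdFF}, so $\Dfrak(D)=\regsh_X$ is exactly the statement that the defect $b$-divisor of $D$ is trivial, i.e. that $D$ is numerically Cartier; the only point to check is that the stabilized value of our $\dfrak^m(D)$ agrees with what \cite{BdFF} call the defect ideal, which should be immediate from the definitions. (Concretely, $f^\natural(mD)+f^\natural(-mD)$ is effective, because $\regsh_X(-mD)\cdot\regsh_X(mD)\subseteq\regsh_X$, and is supported on $\exc(f)$, because on the strict transform of a prime divisor of $X$ the two natural pullbacks contribute $m$ and $-m$ times the coefficient of $D$; so $\dfrak^m(D)=\regsh_X$ forces $\big\lfloor\tfrac1m(f^\natural(mD)+f^\natural(-mD))\big\rfloor=0$, the constant section $1$ having to be a global section of the corresponding sheaf.) For the second assertion I would first upgrade ``numerically Cartier'' to ``$\Q$-Cartier'' using finite generation, and then observe that finite generation of the sheaf $\Rscr(X,-D)$ is automatic for a $\Q$-Cartier divisor.

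For the upgrade to $\Q$-Cartier: since $\Rscr(X,D)$ is finitely generated, \cite[Lemma 6.2]{komo} (used as in Section 5) provides a \emph{small} birational morphism $\pi\colon X'\to X$ — namely $X'=\Proj_X\Rscr(X,D)$ — with $D':=\pi^{-1}_*D$ a $\pi$-ample $\Q$-Cartier divisor. Because $\pi$ is small, $\pi^*D=\pi^{-1}_*D=D'$; numerical Cartierness of $D$ means $D$ carries a numerical class on $X$, whose pullback $\pi^*D=D'$ is then $\pi$-numerically trivial, i.e. $D'\equiv_\pi 0$. But a divisor that is simultaneously $\pi$-ample and $\pi$-numerically trivial leaves $\pi$ with no positive-dimensional fibre, so $\pi$ is finite, and a finite birational morphism onto the normal variety $X$ is an isomorphism. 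Hence $D=D'$ is $\Q$-Cartier.

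Finally, with $D$ $\Q$-Cartier, choose $\ell\ge1$ with $\ell D$ Cartier. Then $\Rscr(X,-D)^{(\ell)}=\bigoplus_{m\ge1}\regsh_X(-\ell D)^{\otimes m}$ is generated over $\regsh_X$ in degree $1$ by the coherent invertible sheaf $\regsh_X(-\ell D)$ (it is the irrelevant ideal of $\mathrm{Sym}_{\regsh_X}\regsh_X(-\ell D)$), hence is a finitely generated $\regsh_X$-algebra, and finite generation of a Veronese subalgebra yields finite generation of $\Rscr(X,-D)$ by \cite[5.68]{hk}. I expect the main obstacle to be the middle paragraph: converting the ($b$-divisor-level) triviality of the defect into the honest numerical triviality $D'\equiv_\pi 0$ of a specific divisor on the ample model, and thereby into $\Q$-Cartierness of $D$; once that is in hand the last step is purely formal, and the argument in fact proves slightly more than is stated, namely that $D$ is genuinely $\Q$-Cartier.
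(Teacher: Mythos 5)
The serious gap is exactly where you flag it: the deduction that $D':=\pi^{-1}_*D$ is $\pi$-numerically trivial. ``Numerically Cartier'' in the sense of \cite{BdFF} does not mean that $D$ carries a numerical class on $X$ whose pullback to $X'$ is $\pi$-trivial; in the language of this paper it means $f^*(-D)=-f^*(D)$ on every resolution. To extract $D'\equiv_\pi 0$ from that you would have to produce, on a common resolution $g\colon Y\to X'$, a $g$-numerically trivial divisor pushing forward to $D'$, identify it with $g^*D'$ (negativity lemma), and then push intersection numbers down to $X'$; none of this is in your sketch, and it is the entire content of the step. (It can in fact be carried out here, because the hypothesis $\Dfrak(D)=\regsh_X$ yields the exact identity $f^\natural(-mkD)=-f^\natural(mkD)$ on sufficiently high models, and both $-f^\natural(mkD)$ and $-f^\natural(-mkD)$ are $f$-globally generated, hence $f$-nef, so $f^\natural(mkD)$ is $f$-numerically trivial --- but that argument uses the full strength of the hypothesis, not the bare phrase ``numerically Cartier,'' and you have not made it.) There is also a sign slip in your first paragraph: triviality of $\dfrak^m(D)$ forces $\lfloor\frac1m(-f^\natural(-mD)-f^\natural(mD))\rfloor=0$, the round-down of an \emph{anti-effective} divisor, which does force the divisor to vanish; the round-down of the effective divisor $\frac1m(f^\natural(mD)+f^\natural(-mD))$ being zero would only bound its coefficients by $m$. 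Finally, identifying these ideals with the defect ideals of \cite{BdFF} is itself something to verify --- the paper only says its construction is \emph{based on} theirs --- so part one should not rest on that citation alone.

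The paper's route is entirely different and more elementary. From $\dfrak^{mk}(D)=\regsh_X$ it extracts $f^\natural(-mkD)=-f^\natural(mkD)$ on any sufficiently high model, hence (by pushforward) on every model; dividing by $mk$ gives $f^*(-D)=-f^*(D)$, which is the numerically Cartier condition. For the second assertion it never passes through $\Q$-Cartierness or any small model: numerical Cartierness gives $\regsh_X(mD)\cdot\regsh_X(-mD)=\regsh_X$ for $m$ sufficiently divisible (citing \cite{dFH}), and then $\regsh_X(-mD)^q\cong\regsh_X(-mD)^q\otimes\regsh_X(mD)^q\otimes\regsh_X(-mqD)\cong\regsh_X(-mqD)$, where the identification $\regsh_X(mD)^q\cong\regsh_X(mqD)$ is exactly the finite generation of $\Rscr(X,D)$; so $\Rscr(X,-D)^{(m)}$ is generated in degree one and one concludes by the Veronese argument, as in your last paragraph. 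Your closing step (Veronese plus \cite[5.68]{hk}) is fine, and your intended intermediate conclusion that $D$ is genuinely $\Q$-Cartier is in fact true under these hypotheses, but the path you propose to it is the one step you have not proved.
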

\begin{proof} Let $\Dfrak(D)=\dfrak^m(D)=\dfrak^{mk}(D)=\regsh_X$, for some $m$ and all $k>0$. Since $\dfrak^{km}(D)=\regsh_X$, then, for any resolution $f$ high enough
$$
\lfloor-\frac{1}{mk}(-f^\natural(-mkD)-f(mkD))\rfloor=0.
$$
Since the quantity inside the round down is non-positive, it must be
$$
f^\natural(-mkD)=-f^\natural(mkD).
$$
More precisely, we have the above identity for a given resolution $f_{mk}$ and for any other higher resolution. But since this is an equality of Weil divisors, by pushforward we have the same identity on any model.

The first consequence of this is that, for any resolution $f$ of $X$, $f^*(-D)=-f^*(D)$, that is, $D$ is numerically Cartier.

The second consequence is the finite generation of $\Rscr(X,-D)$, and this comes from observing the relation between finite generation and numerically Cartier (see, for eaxample, \cite{dFH}, 2.2). In this case, we have that, since $D$ is numerically Cartier, $\regsh_X(mD)\cdot\regsh_X(-mD)=\regsh_X$ for $m$ sufficiently divisible (\cite{dFH}, Definitions 2.22 and 2.24). But then, for any $q>0$ and $m$ sufficiently divisible,
\begin{eqnarray*}
\regsh_X(-mD)^q&\cong&\regsh_X(-mD)^q\otimes\regsh_X(mqD)\otimes\regsh_X(-mqD)\cong\\
&\cong&\regsh_X(-mD)^q\otimes\regsh_X(mD)^q\otimes\regsh_X(-mqD)\cong\\
&\cong&\regsh_X(-mqD).
\end{eqnarray*}
The first isomorphism comes from what observed earlier, while the second one from the finite generation of $\Rscr(X,D)$. Thus $\Rscr(X,-mD)=\Rscr(X,-D)^{(m)}$ is finitely generated, which implies that $\Rscr(X,-D)$ is finitely generated.
\end{proof}
\begin{rk} The above proof shows that, if $D$ is such that $\Rscr(X,D)$ is finitely generated and $D$ is numerically Cartier, then $\Rscr(X,-D)$ is finitely generated.
\end{rk}
\begin{cor}\label{cor:lt+ and D=O, then klt} Let $X$ be a normal variety with lt${}^+$ singularities and such that $\Dfrak(K_X)=\regsh_X$ (or such that $X$ is numerically Cartier). Then $X$ is klt.
\end{cor}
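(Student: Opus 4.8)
The plan is to chain together the results already in place; the statement is essentially a corollary of Theorems \ref{thm:fgcr} and \ref{thm:kltiffanticanonicalfg} together with the defect-ideal corollary proved just above.

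First I would make sure the hypothesis is meaningful. Since $X$ is lt${}^+$, Theorem \ref{thm:fgcr} guarantees that $\Rscr(X,K_X)$ is finitely generated; this is exactly what is needed for the ideal $\Dfrak(K_X)$ to be defined, so the assumption $\Dfrak(K_X)=\regsh_X$ (equivalently, in the second formulation, that $K_X$ is numerically Cartier) makes sense in the first place.

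Next I would deduce finite generation of the anticanonical ring. If $\Dfrak(K_X)=\regsh_X$, then the preceding corollary, applied with $D=K_X$, shows simultaneously that $K_X$ is numerically Cartier and that $\Rscr(X,-K_X)$ is finitely generated. If instead one assumes only that $X$ is numerically Cartier, one combines this with the finite generation of $\Rscr(X,K_X)$ coming from Theorem \ref{thm:fgcr} and invokes the remark following the preceding corollary (which says precisely that $\Rscr(X,D)$ finitely generated plus $D$ numerically Cartier forces $\Rscr(X,-D)$ finitely generated) to reach the same conclusion. Either way, $\Rscr(X,-K_X)$ is a finitely generated $\regsh_X$-algebra. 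Finally I would apply Theorem \ref{thm:kltiffanticanonicalfg}: $X$ is lt${}^+$ and $\Rscr(X,-K_X)$ is finitely generated, hence $X$ is log terminal, i.e. klt.

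The honest remark about difficulty is that there is no real obstacle left at this point — all the substance has been absorbed into Theorems \ref{thm:fgcr}, \ref{thm:kltiffanticanonicalfg} and the defect-ideal corollary. The only thing requiring any care is the order of the logical dependencies: one must first use lt${}^+$ to obtain finite generation of $\Rscr(X,K_X)$ before the ideal $\Dfrak(K_X)$, and hence the hypothesis, is even available, and only then extract finite generation of $\Rscr(X,-K_X)$ and feed it back into the lt${}^+$ criterion.
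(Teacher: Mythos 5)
Your proposal is correct and follows exactly the chain the paper intends (the paper leaves this corollary without a written proof precisely because it is the concatenation of Theorem \ref{thm:fgcr}, the preceding defect-ideal corollary and its remark, and Theorem \ref{thm:kltiffanticanonicalfg}). Your observation about the order of dependencies --- that lt${}^+$ must first supply finite generation of $\Rscr(X,K_X)$ before $\Dfrak(K_X)$ is even defined --- is the one point worth being explicit about, and you handled it correctly.
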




\begin{thebibliography}{99}
\bibitem[BCHM10]{bchm} Birkar, C., Cascini, P., Hacon, C., McKernan, J., ``Existence of Minimal Models for Varieties of Log General Type'', \emph{J. Amer. Math. Soc.}, vol. 23 (2010), pp. 405-468
\bibitem[BdFF12]{BdFF} Boucksom, S., de Fernex, T., Favre, C., ``The volume of an isolated singularity'', \emph{Duke Math. J.}, vol 161 (2012), pp. 1455-1520
\bibitem[dFH09]{dFH} de Fernex, T., Hacon, C., ``Singularities on Normal Varieties'', \emph{Compositio Mathematica}, vol 145, Fasc. 2 (2009), pp. 393-414
\bibitem[Har77]{har} Hartshorne, R., \emph{Algebraic Geometry}, Springer, GTM 52 (1977)
\bibitem[HK10]{hk} Hacon, C., Kov\'acs, S., \emph{Classification of Higher Dimensional Algebraic Varieties}
\bibitem[Kol08]{kol} Koll\'ar, J., ``Exercises in the birational geometry of algebraic varieties'', arXiv: 0809.2579v2 (2008)
\bibitem[KM98]{komo} Koll\'ar, J., Mori, S., \emph{Birational Geometry of Algebraic Varieties}, Cambridge Tracts in Mathematics, 134, Cambridge University Press (1998)
\bibitem[Urb12a]{stefano} Urbinati, S., ``Discrepancies of non-$\Q$-Gorenstein Varieties'',  \emph{Michigan Math. J.}, Vol. 61, Issue 2 (2012), pp. 265-277
\bibitem[Urb12b]{stefano2} Urbinati, S., ``Divisorial models of normal varieties'', arXiv: 1211.1692 (2012)
\end{thebibliography}
\end{document}